\newtheorem{theorem}{Theorem}[section]
\newtheorem{lemma}[theorem]{Lemma}
\newtheorem{corollary}[theorem]{Corollary}
\newtheorem{remark}[theorem]{Remark}
\newcommand\supp{\mathop{\rm supp}}
\newcommand\esssup{\mathop{\rm esssup}}
\newcommand\id{\mathop{\rm id}}
\newcommand\nph{\varphi}
\newcommand\hh{\mathop{\rm h}}
\newcommand\eh{\mathop{\rm eh}}
\newcommand\vn{\mathop{\rm VN}}
\newcommand\cb{\mathop{\rm cb}}
\newcommand\ah{\mathop{A_{\hh}}}
\newcommand\ahg{\mathop{\ah(G)}}
\newcommand{\cl}[1]{\mathcal{#1}}
\newcommand{\bb}[1]{\mathbb{#1}}
\newcommand{\bG}{\mathbb{G}}
\newcommand{\G}{\mathbb{G}}
\newcommand{\norm}[1]{\| #1\|}
\newcommand{\cM}{\mathcal{M}}
\newcommand{\bM}{\mathbb{M}}
\newcommand{\m}{\textsf{m}}
\newcommand{\ignore}[1]{{ }}
\begin{document}

\title{Completely bounded maps and invariant subspaces}

\author{M. Alaghmandan}
\address{School of Mathematics and Statistics, Carleton University, Ottawa, ON, Canada H1S 5B6}
\email{mahmood.alaghmandan@carleton.ca}

\author{I. G. Todorov}
\address{Mathematical Sciences Research Centre, Queen's University Belfast, Belfast BT7 1NN, United Kingdom, and 
School of Mathematical Sciences, Nankai University, 300071 Tianjin, China}
\email {i.todorov@qub.ac.uk}

\author{L. Turowska}
\address{Department of Mathematical Sciences,
Chalmers University of Technology and  the University of Gothenburg,
Gothenburg SE-412 96, Sweden}
\email{turowska@chalmers.se}

\date{27 January 2019}

\maketitle

\begin{abstract}
We provide a description of certain invariance properties of completely bounded bimodule maps
in terms of their symbols. If $\bG$ is a locally compact quantum group,
we characterise the completely bounded $L^{\infty}(\bG)'$-bimodule maps that send
$C_0(\hat\bG)$ into $L^{\infty}(\hat\bG)$ in terms of the properties of
the corresponding elements of the normal Haagerup tensor product $L^{\infty}(\bG) \otimes_{\sigma\hh} L^{\infty}(\bG)$.
As a consequence,
we obtain an intrinsic characterisation of the normal completely bounded $L^{\infty}(\bG)'$-bimodule maps that leave
$L^{\infty}(\hat\bG)$ invariant, extending and unifying results, formulated in the current literature separately for the
commutative and the co-commutative cases.
\end{abstract}

\section{Introduction}

Let $\cl M$ be a von Neumann algebra, acting on a Hilbert space $H$, with commutant $\cl M'$.
The \emph{normal Haagerup tensor product} $\cl M\otimes_{\sigma\hh}\cl M$ was introduced by
E. G. Effros and A. Kishimoto in \cite{ek}, where it was shown
that to each of its element $\chi$ there corresponds, in a canonical way, 
a completely bounded $\cl M'$-bimodule map $\Phi_{\chi}$ on the space $\cl B(H)$ of all bounded linear operators on $H$.
Let us call $\chi$ the \emph{symbol} of the map $\Phi_{\chi}$.
The transformation $\Phi_{\chi}$ is in addition weak* continuous precisely when $\chi$ belongs to the, smaller,
\emph{extended Haagerup tensor product} $\cl M\otimes_{\eh}\cl M$, introduced by U. Haagerup \cite{haagerup}.
The present paper is concerned with the question of how specific invariant subspace properties of the map
$\Phi_{\chi}$ are reflected in its symbol $\chi$.

As a first motivation, we single out Herz-Schur multipliers of the Fourier algebra $A(G)$ of a locally compact group $G$,
introduced by J. de Canniere and U. Haagerup \cite{ch}. These objects have played a prominent role
in operator algebra theory, in particular in the study of approximation properties of
the von Neumann algebra $\vn(G)$ and the reduced group C*-algebra $C_r^*(G)$ of $G$ \cite{bo, hk, knudby}. 
The numerous applications they have found were 
facilitated to a great extent by the description, due to J. E. Gilbert and to M. Bo$\dot{\textrm{z}}$ejko and G. Fendler
\cite{bf_arch}, which identifies them with a part of the Schur multipliers on the direct product $G\times G$.
Recall that the space of Schur multipliers \cite{haagerup, pa, pi, t_serdica} coincides with the 
extended Haagerup tensor product $L^{\infty}(G) \otimes_{\eh} L^{\infty}(G)$,
where $L^{\infty}(G)$ is identified with the algebra of operators of multiplication by essentially bounded functions on $G$,
acting on the Hilbert space $L^2(G)$.
Herz-Schur multipliers can be identified with
those completely bounded weak* continuous $L^{\infty}(G)$-bimodule maps that leave $\vn(G)$ invariant.
As $L^{\infty}(G) \otimes_{\eh} L^{\infty}(G)$ can be naturally embedded in $L^{\infty}(G\times G)$,
a natural problem, addressed in \cite{spronk} and \cite{nrs}, is that
of characterising the functions $\nph : G\times G \to \bb{C}$
that are symbols of Herz-Schur multipliers.

Exchanging the roles of $L^{\infty}(G)$ and $\vn(G)$, one
may consider the space of completely bounded weak* continuous $\vn(G)'$-bimodule maps on $\cl B(L^2(G))$.
Since $\vn(G) \otimes_{\eh} \vn(G)$ naturally embeds in $\vn(G\times G)$,
to each such map there corresponds canonically an element of the Neumann algebra $\vn(G\times G)$.
Those such maps that leave $L^{\infty}(G)$ invariant
were characterised by M. Neufang \cite{neufang} (see also \cite{nrs})
as arising in a canonical fashion from bounded complex measures on $G$.
In \cite{bi}, under the restriction that $G$ be weakly amenable,
we provided an intrinsic characterisation of the
symbols from $\vn(G) \otimes_{\eh} \vn(G)$ whose maps leave $L^{\infty}(G)$ invariant,
showing that, when viewed as elements of $\vn(G\times G)$,  
they are precisely those supported, in the sense of P. Eymard \cite{eymard}, on the anti-diagonal of $G$.

The invariance properties in the two cases described above
were placed in the same context in \cite{junge-Neufang},
where the authors
showed that, if $\G$ is a locally compact quantum group with underlying von Neumann algebra $L^{\infty}(\G)$
and a dual quantum group $\hat\G$ with underlying von Neumann algebra $L^{\infty}(\hat\G)$,
the completely bounded weak* continuous
$L^{\infty}(\G)'$-bimodule maps that leave $L^{\infty}(\hat\G)$ invariant correspond in a canonical fashion
to the completely bounded left multipliers of the predual $L^1(\hat\G)$ of $L^{\infty}(\hat\G)$
(we direct the reader to \cite{junge-Neufang} for the definitions and further details).
However, the problem of characterising intrinsically the elements of
$L^{\infty}(\G) \otimes_{\eh} L^{\infty}(\G)$
which are symbols of maps leaving the von Neumann algebra $L^{\infty}(\hat\G)$ invariant, has not been addressed in the literature.

One of the main aims of the present paper is to exhibit such a characterisation.
We unify results in the literature that are currently stated in the two extreme cases -- for commutative and co-commutative locally compact
quantum groups -- arriving at a condition that captures both simultaneously.
In fact, our results go beyond this aim, as we are able to drop the requirement of
weak* continuity, and characterise intrinsically those elements of the normal Haagerup tensor product
$L^{\infty}(\G) \otimes_{\sigma \hh} L^{\infty}(\G)$ that give rise to completely bounded
(but not necessarily weak* continuous) maps  on $\cl B(L^2(\G))$
sending the reduced C*-algebra $C_0(\hat\G)$ of $\hat\G$ into $L^{\infty}(\hat\G)$.

Specialising to the case of co-commutative quantum groups, we obtain a generalisation of our previous results from \cite{bi},
removing the assumption of weak amenability imposed therein.
On the other hand, specialising to the case of commutative quantum groups,
we obtain the first, to the best of our knowledge, rigorous characterisation
of the Schur multipliers $\nph : G\times G\to \bb{C}$ whose corresponding map on $\cl B(L^2(G))$ leaves
$\vn(G)$ invariant, in terms of the function $\nph$ alone; the result states that this happens if and only if, for every $r\in G$,
the equality $\nph(sr,tr) = \nph(s,t)$ holds for marginally almost all $(s,t)\in G\times G$.

Our main result in the setting of quantum groups, 
Theorem \ref{t:(m)-iff-VN-invariant}, is obtained as a consequence of a more
general statement (Theorem \ref{th_chargenn}) that expresses invariance properties of completely bounded
bimodule maps in terms of their symbol.
This general viewpoint is presented in Section \ref{s_ci}, after obtaining some preparatory
results. It is applied, in Section \ref{s:LCQG}, to the case of locally compact quantum groups,
while Sections \ref{ss_coc} and \ref{ss_com} contain the further applications to
co-commutative and commutative quantum groups, respectively.

\medskip

We finish this section by setting basic notation.
We denote by $\cl B(H)$ the algebra of all bounded linear operators acing on a Hilbert space $H$.
If $\cl M\subseteq \cl B(H)$ is a von Neumann algebra, we let $\cM_*$ denote its predual,
consisting of all normal functionals on $\cl M$, and equip it
with the operator space structure arising from its natural embedding into $\cl M^*$.
We denote by ${\rm CB}_{\cl M}(\cl B(H))$ (resp. ${\rm CB}_{\cl M}^{\sigma}(\cl B(H))$)
the operator space of all completely bounded (resp. weak* continuous, or normal, completely bounded) $\cl M$-bimodule maps on
$\cl B(H)$.
The algebraic tensor product of vector spaces $\cl X$ and $\cl Y$ is denoted by $\cl X\odot \cl Y$,
the space of all $n$ by $n$ matrices -- by $\bM_n$, and the tensor product $\cl X\odot \bM_n$ is written as $\bM_n(\cl X)$
and identified with the space of all $n$ by $n$ matrices with entries in $\cl X$.
If $\cl X$ and $\cl Y$ are operator spaces, we denote by $\cl X\otimes_{\hh} \cl Y$ the Haagerup tensor product of $\cl X$ and $\cl Y$,
and by $\cl X\hat\otimes \cl Y$ their operator projective tensor product.
If $\cl X$ and $\cl Y$ are moreover dual operator spaces, $\cl X\bar\otimes \cl Y$ stands for their weak* spatial tensor product.
If $\nph : \cl X\times\cl Y \to \cl Z$ (where $\cl Z$ is another operator space) and $n,m\in \bb{N}$, we let
$\nph^{(m,n)} : \bM_n(\cl X)\times \bM_m(\cl Y) \to \bM_{nm}(\cl Z)$ be the ampliation of $\nph$, given by
$\nph^{(m,n)}((x_{i,j}),(y_{p,q})) = (\nph(x_{i,j},y_{p,q})_{i,p,j,q}$.
The identity operator is denoted by $1$, and
we let $\cl M\otimes 1 = \{a\otimes 1 : a\in \cl M\}$; it will be clear from the context on which Hilbert space $1$ acts.
We will use throughout the paper basic results from operator space theory, and we refer the reader to the monographs
\cite{blm, er, pa, pi} for the necessary background.


\section{A characterisation of invariance}\label{s_ci}

Let $\cl X$ and $\cl Y$ be dual operator spaces and
${\rm Bil}_{\sigma}(\cl X,\cl Y)$
be the operator space of all normal (i.e. separately weak* continuous)
completely bounded bilinear forms on $\cl X\times \cl Y$.
The \emph{normal Haagerup tensor product} \cite{blm, ek} of $\cl X$ and $\cl Y$
is the operator space dual of ${\rm Bil}_{\sigma}(\cl X,\cl Y)$:
$$\cl X\otimes_{\sigma\hh}\cl Y \stackrel{def}{=} {\rm Bil}_{\sigma}(\cl X,\cl Y)^*.$$
It is characterised by the following universal property: for every dual operator space $\cl Z$ and every
normal completely bounded bilinear map $\phi : \cl X\times \cl Y \to \cl Z$,
there exists a (unique) normal completely bounded linearisation $\tilde{\phi} : \cl X\otimes_{\sigma\hh}\cl Y \to \cl Z$.
It is easy to see that the normal Haagerup tensor product is functorial: if $\cl X_1$ and $\cl Y_1$
are dual operator spaces and $\phi : \cl X \to \cl X_1$ and $\psi : \cl Y\to \cl Y_1$ are normal completely bounded maps
then there exists a (unique) normal completely bounded map
$\phi\otimes\psi : \cl X\otimes_{\sigma\hh}\cl Y \to \cl X_1\otimes_{\sigma\hh}\cl Y_1$ such that
\begin{equation}\label{eq_elte}
(\phi\otimes\psi)(a\otimes b) = \phi(a)\otimes\psi(b), \ \ \ a\in \cl X, b\in \cl Y.
\end{equation}

Let $\cl M$ be a von Neumann algebra acting on a Hilbert space $H$.
There exists a canonical complete isometry, that is also a weak* homeomorphism,
from $\cl M\otimes_{\sigma \hh} \cl M$ onto the space ${\rm CB}_{\cl M'}(\cl B(H))$
of all completely bounded $\cl M'$-bimodule maps on $\cl B(H)$ \cite{ek}.
For $\chi\in \cl M\otimes_{\sigma\hh}\cl M$, we let
$\Phi_{\chi} : \cl B(H)\to \cl B(H)$ be its corresponding map; we have
that
\begin{equation}\label{eq:Phi}
\Phi_{a\otimes b}(x) = axb, \ \ \ x\in \cl B(H), \ a, b \in \cl M.
\end{equation}
We call $\chi$ the \emph{symbol} of $\Phi_{\chi}$.
If $\xi,\eta\in H$, $x\in \cl B(H)$
and $F_{x,\xi,\eta} : \cl M\times \cl M \to \bb{C}$ is the (normal completely bounded) bilinear form
given by $F_{x,\xi,\eta}(a,b) = \langle axb\xi,\eta\rangle$, then \cite[(2.5)]{ek}
$$\langle \chi, F_{x,\xi,\eta}\rangle = \langle \Phi_{\chi}(x)\xi,\eta\rangle, \ \ \ \chi\in \cl M\otimes_{\sigma\hh}\cl M.$$
It follows that the map $\chi \to \Phi_{\chi}$ is continuous when $\cl M\otimes_{\sigma\hh}\cl M$ is equipped with its
weak* topology and ${\rm CB}_{\cl M'}(\cl B(H))$ -- with the point-weak operator topology.

Recall that the \emph{extended Haagerup tensor product} $\cl M\otimes_{\eh}\cl M$ \cite{er}
coincides with the \emph{weak* Haagerup tensor product} $\cl M\otimes_{w^*\hh}\cl M$ \cite{blecher},
and can be canonically identified with the operator space ${\rm CB}_{\cl M'}^{\sigma}(\cl B (H))$ of all
normal completely bounded $\cl M'$-bimodule maps on $\cl B(H)$: for each element
$\chi\in \cl M\otimes_{\eh}\cl M$, there exist families
$(a_i)_{i\in \bb{I}}, (b_i)_{i\in \bb{I}} \subseteq \cl M$
such that the series $\sum_{i\in \bb{I}} a_i a_i^*$ and $\sum_{i\in \bb{I}} b_i^* b_i$ are weak* convergent and the
corresponding map $\Phi_{\chi} : \cl B(H)\to \cl B(H)$ is given by
$$\Phi_{\chi}(x) = \sum_{i\in \bb{I}} a_i x b_i, \ \ \ x\in \cl B(H),$$
where the series converges in the weak* topology of $\cl B(H)$.
We write $\chi \sim \sum_{i\in \bb{I}} a_i\otimes b_i$.

Note that the canonical inclusion $\cl M\otimes_{\eh}\cl M \subseteq \cl M\otimes_{\sigma\hh}\cl M$ is completely isometric.
If $\cl N\subseteq \cl B(H)$ is a(nother) von Neumann algebra, we let
$${\rm CB}_{\cl M'}^{\sigma, \cl N}(\cl B(H)) = \{\Phi \in {\rm CB}_{\cl M'}^{\sigma}(\cl B(H)) : \Phi(\cl N)\subseteq\cl N\}.$$

Let $H$ be a Hilbert space, $\cl M\subseteq \cl B(H)$ be a von Neumann algebra and $f,g\in \cl M_*$.
By the functoriality of the weak* spatial tensor product,
$L_f \stackrel{def}{=} f\otimes\id$ and $R_g \stackrel{def}{=} \id\otimes g$ are well-defined normal completely bounded maps
from $\cl M\bar\otimes\cl M$ into $\cl M$; note that
$$L_f(a\otimes b) = f(a)b \mbox{ and } R_g(a\otimes b) = g(b) a, \ \ \ a,b\in \cl M,$$
$\|L_f\|_{\cb} = \|f\|$ and $\|R_g\|_{\cb} = \|g\|$ (see \cite{tom}).
By the functoriality of the normal Haagerup tensor product,
$$L_f\otimes R_g : (\cl M\bar\otimes\cl M)\otimes_{\rm\sigma h}(\cl M\bar\otimes\cl M)
\to \cl M\otimes_{\sigma\hh} \cl M$$
is a weak* continuous completely bounded map (see \cite{er}).

Let $\m : \cl M\times\cl M\to\cl M$ denote operator multiplication.
The map $\m$ extends uniquely to a weak* continuous completely contractive map
(denoted in the same way) $\m: \cl M\otimes_{\rm \sigma h}\cl M \to \cl M$ \cite{er}.
We let $\m_* : \cl M_* \to \cl M_* \otimes_{\eh} \cl M_*$ be its predual.

\begin{lemma}\label{l:T}
For every $\psi\in (\cl M\bar\otimes\cl M)\otimes_{\rm \sigma h}(\cl M\bar\otimes\cl M)$ there exists (a unique)
 $\mathcal T(\psi) \in\cl M\bar\otimes\cl M\bar\otimes\cl M$ such that
 \[
 \langle \mathcal T(\psi), f\otimes \omega \otimes g\rangle=\langle {\rm m}((L_f\otimes R_g)(\psi)),\omega\rangle,\quad f,g,\omega \in \cl M_*.
 \]
 Moreover, the map
$$\mathcal T : (\cl M\bar\otimes\cl M)\otimes_{\rm \sigma h}(\cl M\bar\otimes\cl M)\to \cl M\bar\otimes\cl M\bar\otimes\cl M$$
is linear, contractive and weak*-continuous, and
 \begin{equation}\label{T:XY}
 \mathcal T(\chi_1\otimes \chi_2)=(\chi_1\otimes 1)(1\otimes \chi_2)
 \end{equation}
  for all $\chi_1, \chi_2 \in\cl M\bar\otimes\cl M$.
\end{lemma}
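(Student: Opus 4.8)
The plan is to realise $\mathcal T$ as the linearisation, through the universal property of the normal Haagerup tensor product, of an explicit bilinear map, and then to check that this linearisation satisfies the stated pairing formula. Concretely, I would introduce
\[
\beta : (\cl M\bar\otimes\cl M)\times(\cl M\bar\otimes\cl M)\to\cl M\bar\otimes\cl M\bar\otimes\cl M,\qquad
\beta(\chi_1,\chi_2) = (\chi_1\otimes 1)(1\otimes\chi_2),
\]
where $\chi_1\otimes 1$ and $1\otimes\chi_2$ denote the amplifications of $\chi_1$ and $\chi_2$ into the first two and the last two legs of $\cl M\bar\otimes\cl M\bar\otimes\cl M$, respectively. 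This $\beta$ is exactly the candidate dictated by the desired identity (\ref{T:XY}), so once it is linearised to $\mathcal T:=\tilde\beta$, the relation (\ref{T:XY}) holds by construction.

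The key point, and the step I expect to be the main obstacle, is to verify that $\beta$ is a \emph{normal completely bounded} bilinear map, so that the universal property quoted above supplies a unique normal completely contractive linearisation. Normality (separate weak* continuity) is routine, since $\chi_1\mapsto\chi_1\otimes 1$ and $\chi_2\mapsto 1\otimes\chi_2$ are normal $*$-homomorphisms and multiplication in the von Neumann algebra $\cl M\bar\otimes\cl M\bar\otimes\cl M$ is separately weak* continuous. The complete contractivity is the delicate part: writing $\pi(\chi_1)=\chi_1\otimes 1$ and $\rho(\chi_2)=1\otimes\chi_2$, both of which are completely contractive maps into $\cl B(H\otimes H\otimes H)$, I would invoke the standard fact that a product of two completely contractive maps into a common $\cl B(K)$ is completely contractive in the Haagerup sense; indeed $\beta$ is the composition of the (completely contractive) multiplication bilinear map on $\cl B(H\otimes H\otimes H)$ with the pair $\pi,\rho$ in its two arguments, and no commutativity of $\pi,\rho$ is required. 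This yields $\|\mathcal T\|_{\cb}\le 1$, hence contractivity; weak* continuity of $\mathcal T$ and the identity (\ref{T:XY}) then come directly from the universal property of $\otimes_{\sigma\hh}$.

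It remains to identify $\mathcal T=\tilde\beta$ with the map described by the pairing formula. Fixing $f,\omega,g\in\cl M_*$, I would note that both sides of the asserted equality are weak* continuous linear functionals of $\psi$: the left-hand side because $\mathcal T$ is weak* continuous and $f\otimes\omega\otimes g$ lies in the predual $\cl M_*\hat\otimes\cl M_*\hat\otimes\cl M_*=(\cl M\bar\otimes\cl M\bar\otimes\cl M)_*$, and the right-hand side because $L_f\otimes R_g$ and $\m$ are weak* continuous and $\omega\in\cl M_*$. Since the algebraic tensor product is weak* dense in $(\cl M\bar\otimes\cl M)\otimes_{\sigma\hh}(\cl M\bar\otimes\cl M)$, it suffices to check the identity on simple tensors $\psi=\chi_1\otimes\chi_2$; using (\ref{eq_elte}), the definitions of $L_f,R_g,\m$ and (\ref{T:XY}), both sides reduce to $\omega(L_f(\chi_1)R_g(\chi_2))$, and a further separate-weak*-continuity and bilinearity reduction brings matters to the elementary case $\chi_1=a\otimes b$, $\chi_2=c\otimes d$, where both sides equal $f(a)\,\omega(bc)\,g(d)$. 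Finally, uniqueness of $\mathcal T(\psi)$ is immediate, as the elementary tensors $f\otimes\omega\otimes g$ span a norm-dense subspace of $\cl M_*\hat\otimes\cl M_*\hat\otimes\cl M_*$, so any element of its dual $\cl M\bar\otimes\cl M\bar\otimes\cl M$ is determined by its values on them.
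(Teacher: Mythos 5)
Your proof is correct, but it runs in the opposite direction from the paper's. The paper starts from the pairing formula: it defines the trilinear form $F_\psi(f,\omega,g)=\langle \m((L_f\otimes R_g)(\psi)),\omega\rangle$, proves it is completely bounded with $\|F_\psi\|_{\cb}\le\|\psi\|$ via the shuffle map $\bM_n(\cl M)\otimes_{\sigma\hh}\bM_m(\cl M)\to\bM_{nm}(\cl M\otimes_{\sigma\hh}\cl M)$, obtains $\cl T(\psi)$ by dualising against $\cl M_*\hat\otimes\cl M_*\hat\otimes\cl M_*$, establishes weak* continuity of $\cl T$ by a separate Krein--Shmulian argument, and only then verifies (\ref{T:XY}) on elementary tensors. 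You instead start from (\ref{T:XY}): you take the bilinear map $\beta(\chi_1,\chi_2)=(\chi_1\otimes 1)(1\otimes\chi_2)$, observe that it is separately weak* continuous and completely contractive in the Haagerup sense (being operator multiplication in $\cl B(H\otimes H\otimes H)$ precomposed with the normal $*$-homomorphisms $\chi\mapsto\chi\otimes 1$ and $\chi\mapsto 1\otimes\chi$), and invoke the universal property of $\otimes_{\sigma\hh}$ to get contractivity, normality and (\ref{T:XY}) simultaneously; the pairing formula is then recovered by checking it on elementary tensors and using that both sides are weak* continuous in $\psi$. Your route is shorter and avoids both the matrix estimates and the Krein--Shmulian step, at the cost of leaning harder on the universal property (in particular on the facts that the linearisation is automatically weak* continuous with the same cb norm, and that the Haagerup cb norm of a bilinear map is stable under completely bounded substitutions in each variable -- both standard, and the second is just functoriality of the Haagerup tensor product). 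The paper's construction has the mild advantage of producing $\cl T(\psi)$ directly from the formula that is actually used in Lemma \ref{l_ident} and Theorem \ref{th_chargenn}, together with the explicit bound $\|\cl T(\psi)\|\le\|\psi\|$ read off from $F_\psi$; but as a proof of the lemma as stated, your argument is complete.
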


\begin{proof}
Let $\psi\in (\cl M\bar\otimes\cl M)\otimes_{\rm \sigma h}(\cl M\bar\otimes\cl M)$ and
$F_\psi: \cl M_*\times\cl M_*\times\cl M_*\to\mathbb C$ be the trilinear map given by
\begin{equation}\label{eq_Fpsi}
F_{\psi}(f,\omega,g) = \langle \m((L_f\otimes R_g)(\psi)),\omega\rangle, \ \ \ f,g,\omega \in\cl M_*.
\end{equation}
Let $  X = (f_{i,j})_{i,j}\in \bM_n(\cl M_*)$, $Y = (g_{k,l})_{k,l} \in \bM_m(\cl M_*)$, and
$$L_X : \cl M\bar{\otimes} \cl M \to M_n(\cl M), \ 
R_Y : \cl M\bar{\otimes} \cl M \to M_m(\cl M)$$ 
be the maps given by
$$L_{X}(\chi) = (L_{f_{i,j}}(\chi))_{i,j}, \ R_{Y}(\chi) = (R_{g_{k,l}}(\chi))_{k,l}, \ \ \ \chi \in \cl M\bar\otimes \cl M.$$
Standard arguments show that $L_X$ and $R_Y$ are completely bounded and 
$$\|L_X\|_{\rm cb} \leq \norm{X}_{\bM_n(\cl M_*)}, \ \ \|R_Y\|_{\rm cb}\leq \norm{Y}_{\bM_m(\cl M_*)}.$$

By   \cite[p. 149]{er}, the map
$$L_X\otimes R_Y :
(\cl M\bar\otimes\cl M)\otimes_{\rm \sigma h}  (\cl M\bar\otimes\cl M) \to \bM_n(\cl M)\otimes_{\rm \sigma h} \bM_m(\cl M)$$
is completely bounded with $\|L_X\otimes R_Y\|_{\cb} \leq \norm{X}_{\bM_n(\cl M_*)}\norm{Y}_{\bM_m(\cl M_*)}$.

By \cite[Theorem 6.1]{er},
the canonical shuffle map
\begin{eqnarray*}
S_\sigma : \bM_n(\cl M)\otimes_{\rm \sigma h}\bM_m(\cl M)
& = &
(\cl M\bar\otimes \bM_n)\otimes_{\rm \sigma h}(\cl M\bar\otimes \bM_m)\\
& \to & (\cl M\otimes_{\rm \sigma h}\cl M)\bar\otimes (\bM_n\otimes_{\rm \sigma h} \bM_m)
\end{eqnarray*}
is a complete contraction.
Since the minimal norm is dominated by the Haagerup norm, we have, on the other hand, that the map
\[
\theta: \bM_n\otimes_{\rm \sigma h} \bM_m= \bM_n\otimes_{\rm  h}\bM_m\to \bM_{nm}
\]
is a  complete contraction.
It follows that the canonical map
$$\tau : \bM_n(\cl M)\otimes_{\rm \sigma h}\bM_m(\cl M) \to {\bM_{nm}(\cl M\otimes_{\rm\sigma h}\cl M)},$$
induced by $S_{\sigma}$ and $\theta$,
is a complete contraction.
Therefore,  for each $p\in \bb{N}$ and $Z = (\omega_{s,t})_{s,t} \in \bM_p(\cl M_*)$, we have
\begin{eqnarray*}
& & \|F_\psi^{(m,n,p)}(X,Y,Z)\|_{\bM_{mnp}}\\
& = &\|\langle ((L_{f_{i,j}}\otimes R_{g_{k,l}})(\psi), \m_*^{(p)}(\omega_{s,t})\rangle\|_{M_{mnp}}\\
& \leq & \|((L_{f_{i,j}}\otimes R_{g_{k,l}})(\psi))_{i,j,k,l}\|_{\bM_{nm}(\cl M\otimes_{\rm\sigma h}\cl M)}
\|(\m_*(\omega_{s,t}))_{s,t}\|_{\bM_p(\cl M_*\otimes_{\rm eh}\cl M_*)}\\
& \leq & \| (L_X\otimes R_Y)(\psi)\|_{\bM_{n}(\cl M)\otimes_{\rm\sigma h}\bM_m(\cl M)}\|Z\|\leq \|\psi\|\|X\|\|Y\|\|Z\|.
\end{eqnarray*}
Thus,
the map $F_\psi$  is a completely bounded trilinear form on $\cl M_*\times\cl M_*\times\cl M_*$, with
\begin{equation}\label{cb}
\|F_\psi\|_{\rm cb}\leq  \|\psi\|.
\end{equation}
It therefore linearises to a bounded linear functional on $\cl M_*\hat\otimes\cl M_*\hat\otimes\cl M_*$,
which we denote again by $F_{\psi}$.
Since $(\cl M_*\hat\otimes\cl M_*\hat\otimes\cl M_*)^* \equiv \cl M\bar\otimes\cl M \bar\otimes\cl M$,
there exists $\cl T(\psi) \in \cl M\bar\otimes\cl M \bar\otimes\cl M$ such that
\begin{equation}\label{eq_dualw}
F_{\psi}(w) = \langle \cl T(\psi),w\rangle, \ \ \ w\in \cl M_*\hat{\otimes}\cl M_*\hat\otimes\cl M_*.
\end{equation}
By (\ref{cb}),
\begin{eqnarray*}
\|\cl T(\psi)\|
& = & \sup\{|\langle \cl T(\psi),w\rangle| : \|w\|\leq 1\}
= \sup\{\|F_{\psi}(w)\| : \|w\|\leq 1\}\\
& = &
\|F_{\psi}\|\leq \|\psi\|.
\end{eqnarray*}
The uniqueness of $\cl T(\psi)$ follows from the density of
$\cl M_*\odot \cl M_*\odot\cl M_*$ in $\cl M_*\hat{\otimes}\cl M_*\hat\otimes\cl M_*$.

We show that $\cl T$ is weak* continuous. To this end,
for each element $w$ of $\cl M_*\hat{\otimes}\cl M_*\hat\otimes\cl M_*$, define a linear functional  $E_w$
on $(\cl M\bar\otimes\cl M)\otimes_{\rm \sigma h}(\cl M\bar\otimes\cl M)$
by
\[
E_w( \psi) = \langle \cl T(\psi), w\rangle, \ \ \ \psi \in (\cl M\bar\otimes\cl M)\otimes_{\rm \sigma h}(\cl M\bar\otimes\cl M).
\]
It clearly suffices to prove that $E_w$ is weak* continuous, for each $w$.
By \cite[p 75]{s}, it suffices to show that
the kernel of $E_w$ is weak* closed which, by virtue of the
Krein-Shmulian Theorem \cite[p 152]{s}, is equivalent to the fact that
the intersection of $\ker(E_w)$ with every norm closed ball of $(\cl M\bar\otimes\cl M)\otimes_{\rm \sigma h}(\cl M\bar\otimes\cl M)$ is  weak* closed.

Fix $w \in \cl M_*\hat{\otimes}\cl M_*\hat\otimes\cl M_*$. Let $C > 0$,
$\psi \in (\cl M\bar\otimes\cl M)\otimes_{\rm \sigma h}(\cl M\bar\otimes\cl M)$ and  $(\psi_{\alpha})_{\alpha\in \bb{A}}$ be a net
in $\ker(E_w)$ that converges to   $\psi$ in the weak* topology, such that
$\|\psi_{\alpha}\|\leq C$,  $\alpha \in \bb{A}$. Fix $\epsilon > 0$.
Let $w_0 \in \cl M_* \odot \cl M_* \odot \cl M_*$ be such that
$\|w - w_0\| < \epsilon/3C$.
By the weak* continuity of $L_f\otimes R_g$ and $\m$, (\ref{eq_Fpsi}) and (\ref{eq_dualw})
imply that there exists $\alpha_0\in \bb{A}$ such that
\[
|E_{w_0}(\psi_\alpha) - E_{w_0}(\psi)|= |F_{\psi_{\alpha}}(w_0) - F_{\psi}(w_0)| < \epsilon/3, \ \ \ \alpha\geq \alpha_0.
\]
An $\epsilon/3$-argument now implies that
\[
|E_{w}(\psi)| =|E_{w}(\psi_\alpha) - E_{w}(\psi)|= |F_{\psi_{\alpha}}(w) - F_{\psi}(w)| < \epsilon, \ \ \ \alpha\geq \alpha_0.
\]
It follows that $E_w(\psi)=0$ and the weak* continuity of $\cl T$ is established.

Finally, we show (\ref{T:XY}). Assume first that $\chi_1 = a\otimes b$ and $\chi_2 = c\otimes d$ where $a,b,c,d\in \cl M$. Then
\begin{eqnarray*}
\langle \m\left((L_f\otimes R_g)(\chi_1 \otimes \chi_2) \right),\omega\rangle
& = & \langle\m(L_f(\chi_1)\otimes R_g(\chi_2)),\omega\rangle\\
& = & \langle L_f(\chi_1)R_g(\chi_2),\omega\rangle\\
& = & f(a) g(d) \; \langle bc ,\omega\rangle\\
& = & \langle a\otimes bc\otimes d, f\otimes \omega \otimes g\rangle\\
& = & \langle (\chi_1\otimes 1)(1\otimes \chi_2), f\otimes \omega \otimes g\rangle,
\end{eqnarray*}
giving $\cl T(\chi_1\otimes \chi_2)=(\chi_1\otimes 1)(1\otimes \chi_2)$.
By linearity,
the equality holds for any $\chi_1$, $\chi_2$ in the algebraic tensor product $\cl M\odot\cl M$.
As $\cl T$ is weak* continuous and the multiplication in $\cl M\bar\otimes\cl M\bar\otimes\cl M$ is separately weak*
continuous, we obtain the equality (\ref{T:XY}) for any $\chi_1, \chi_2 \in\cl M\bar\otimes\cl M$.
\end{proof}

\noindent {\bf Remark. } Intuitively, the map $\cl T$ from Lemma \ref{l:T}
\lq\lq multiplies the two middle variables'' in the four-term tensor product,
leaving the outer variables intact,
thus producing a three-variable element.

\medskip

Let $H$ be a Hilbert space, and $\cl M$ and $\cl N$ be von Neumann algebras acting on $H$.
We fix for the rest of the section a unitary operator $W\in \cl B(H)\bar\otimes\cl N$
(resp. $V\in \cl N \bar\otimes \cl B(H)$), and define maps
$$\Gamma_W : \cl M\to \cl B(H\otimes H) \ \mbox{ and } \ \Gamma'_V : \cl M \to \cl B(H\otimes H)$$
by letting
\begin{equation}\label{eq_gammas}
\Gamma_W(a) = W^*(1\otimes a)W  \ \mbox{ and } \  \Gamma'_V(b) = V(b\otimes 1)V^*.
\end{equation}
Clearly, $\Gamma_W$ and $\Gamma'_V$ are normal completely bounded maps; thus,
there exists a normal completely bounded map
$$\Gamma_W\otimes\Gamma'_V : \cl M\otimes_{\sigma\hh} \cl M \to (\cl B(H)\bar\otimes \cl B(H))\otimes_{\sigma\hh} (\cl B(H)\bar\otimes \cl B(H))$$
such that
$$(\Gamma_W\otimes\Gamma'_V) (a\otimes b) = \Gamma_W(a)\otimes \Gamma'_V(b), \ \ \ a,b\in \cl M.$$

For an operator $T\in \cl B(H\otimes H)$, let
$T_{1,2} = T\otimes 1$ and $T_{2,3} = 1 \otimes T$. 
Recall that, for $\chi \in \cl M\otimes_{\sigma\hh} \cl M$,
we denote by $\Phi_\chi$ the corresponding completely bounded mapping on ${\cl B}(H)$.

\begin{lemma}\label{l_ident}
Let $H$ be a Hilbert space, $\cl M$ and $\cl N$ be von Neumann algebras acting on $H$,
$\chi\in \cl M\otimes_{\sigma\hh}\cl M$,
$W \in \cl B(H)\bar\otimes\cl N$, $V \in \cl N \bar\otimes \cl B(H)$ be unitary operators, and $f,g\in \cl B(H)_*$. 
Then
\begin{equation}\label{eq_slices}
(f\otimes \id\otimes g)(W_{1,2} (\cl T\circ (\Gamma_W\otimes\Gamma'_V)(\chi) )V_{2,3})
= \Phi_{\chi}((f\otimes\id)(W)(\id\otimes g)(V)).
\end{equation}
\end{lemma}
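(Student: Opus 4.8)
The plan is to prove (\ref{eq_slices}) by reducing to elementary symbols and then exploiting the unitarity of $W$ and $V$ together with the bimodule property of slice maps. First I would observe that both sides of (\ref{eq_slices}) are weak* continuous functions of $\chi \in \cl M\otimes_{\sigma\hh}\cl M$, valued in $\cl B(H)$ equipped with its weak* topology. For the left-hand side this follows by composing weak* continuous maps: $\Gamma_W\otimes\Gamma'_V$ is normal by construction, $\cl T$ is weak* continuous by Lemma \ref{l:T}, left and right multiplication by the fixed operators $W_{1,2}$ and $V_{2,3}$ are separately weak* continuous in $\cl B(H\otimes H\otimes H)$, and the slice map $f\otimes\id\otimes g$ is weak* continuous since $f$ and $g$ are normal. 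For the right-hand side, I would note that for fixed $x\in\cl B(H)$ and $\omega\in\cl B(H)_*$ the form $F_{x,\omega}(a,b)=\omega(axb)$ lies in ${\rm Bil}_{\sigma}(\cl M,\cl M)$, and $\langle\Phi_{\chi}(x),\omega\rangle=\langle\chi,F_{x,\omega}\rangle$; hence $\chi\mapsto\Phi_{\chi}(x)$ is weak* continuous. Since elementary tensors separate the points of ${\rm Bil}_{\sigma}(\cl M,\cl M)$, the algebraic tensor product $\cl M\odot\cl M$ is weak* dense in $\cl M\otimes_{\sigma\hh}\cl M$; as both sides are linear in $\chi$, it then suffices to verify (\ref{eq_slices}) for $\chi=a\otimes b$ with $a,b\in\cl M$.

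Next I would compute the left-hand side for $\chi=a\otimes b$. Here $(\Gamma_W\otimes\Gamma'_V)(a\otimes b)=\Gamma_W(a)\otimes\Gamma'_V(b)$ is an elementary tensor in $(\cl B(H)\bar\otimes\cl B(H))\otimes_{\sigma\hh}(\cl B(H)\bar\otimes\cl B(H))$, so applying (\ref{T:XY}), with the algebra $\cl M$ there taken to be $\cl B(H)$, gives $\cl T\circ(\Gamma_W\otimes\Gamma'_V)(a\otimes b)=(\Gamma_W(a)\otimes 1)(1\otimes\Gamma'_V(b))$. Spelling out the definitions (\ref{eq_gammas}) in leg notation yields $\Gamma_W(a)\otimes 1=W_{1,2}^*(1\otimes a\otimes 1)W_{1,2}$ and $1\otimes\Gamma'_V(b)=V_{2,3}(1\otimes b\otimes 1)V_{2,3}^*$, where in each case the central leg carries the operator $a$, respectively $b$. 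Substituting into the left-hand side of (\ref{eq_slices}) and using $W_{1,2}W_{1,2}^*=1$ and $V_{2,3}^*V_{2,3}=1$, the outer copies of $W_{1,2}$ and $V_{2,3}$ cancel, and the expression collapses to $(f\otimes\id\otimes g)\bigl((1\otimes a\otimes 1)W_{1,2}V_{2,3}(1\otimes b\otimes 1)\bigr)$.

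It then remains to evaluate this slice, and this is the step I expect to carry the real content. I would invoke two properties of slice maps. First, $f\otimes\id\otimes g$ is a bimodule map over the operators living on the retained central leg, so the factors $1\otimes a\otimes 1$ and $1\otimes b\otimes 1$ pull out to give $a\,(f\otimes\id\otimes g)(W_{1,2}V_{2,3})\,b$. Second, because $W_{1,2}$ and $V_{2,3}$ share only the central leg, with $W$ occupying legs $1,2$ and $V$ occupying legs $2,3$, one has the factorisation $(f\otimes\id\otimes g)(W_{1,2}V_{2,3})=(f\otimes\id)(W)\,(\id\otimes g)(V)$; this is checked on elementary tensors $W=w\otimes n$ and $V=m\otimes v$ and extended to general $W,V$ by separate weak* continuity and weak* density of the algebraic tensor products. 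Combining the two gives $a\,(f\otimes\id)(W)(\id\otimes g)(V)\,b$, which by (\ref{eq:Phi}) equals $\Phi_{a\otimes b}\bigl((f\otimes\id)(W)(\id\otimes g)(V)\bigr)$, the right-hand side of (\ref{eq_slices}). The main obstacle is essentially bookkeeping: keeping the three legs straight and recognising that the disjointness of the outer legs of $W_{1,2}$ and $V_{2,3}$ is precisely what makes the slice of the product factor as the product of the individual slices.
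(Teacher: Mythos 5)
Your proposal is correct and follows essentially the same route as the paper: reduce to $\chi=a\otimes b$ via weak* density of $\cl M\odot\cl M$ and weak* continuity of both sides, apply (\ref{T:XY}) to get $W_{1,2}^*(1\otimes a\otimes 1)W_{1,2}V_{2,3}(1\otimes b\otimes 1)V_{2,3}^*$, cancel the outer unitaries, and evaluate the remaining slice. The only cosmetic difference is that you package the final step as the module property of $f\otimes\id\otimes g$ plus the factorisation $(f\otimes\id\otimes g)(W_{1,2}V_{2,3})=(f\otimes\id)(W)(\id\otimes g)(V)$, whereas the paper verifies the combined identity directly on elementary tensors of $W$ and $V$ and extends by weak* continuity.
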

\begin{proof}
Assume first that $\chi = a\otimes b$, where $a,b\in \cl M$.
Using Lemma \ref{l:T}, we have
\begin{eqnarray*}
\mathcal T\circ(\Gamma_W\otimes \Gamma'_V)(\chi)
& = &
\mathcal T(\Gamma_W(a) \otimes \Gamma'_V(b))\\
& = &
\mathcal T((W^*(1 \otimes a)W) \otimes (V(b \otimes 1)V^*))\\
& = &
W_{1,2}^*(1 \otimes a \otimes 1) W_{1,2} V_{2,3}  (1 \otimes b \otimes 1) V^*_{2,3}.
\end{eqnarray*}
Hence
$$W_{1,2} (\mathcal T\circ(\Gamma_W\otimes \Gamma_V')(\chi)) V_{2,3} =
(1 \otimes a \otimes 1) W_{1,2} V_{2,3}  (1 \otimes b \otimes 1).$$
Thus, in order to establish (\ref{eq_slices}), it suffices to show that, whenever $T\in \cl B(H)\bar\otimes\cl N$ and $S\in \cl N\bar\otimes\cl B(H)$,
we have
\begin{equation}\label{eq_fwgv}
(f\otimes \id\otimes g)((1 \otimes a \otimes 1) T_{1,2} S_{2,3}  (1 \otimes b \otimes 1))
= \Phi_{\chi}((f\otimes\id)(T)(\id\otimes g)(S)).
\end{equation}
To this end, assume first that $T = T_1\otimes T_2$ and $S = S_1\otimes S_2$, where $T_1,S_2\in \cl B(H)$ and
$T_2,S_1\in \cl N$.
Then
\begin{eqnarray*}
& &
(f\otimes \id\otimes g)((1 \otimes a \otimes 1) T_{1,2} S_{2,3}  (1 \otimes b \otimes 1))\\
& = &
(f\otimes \id\otimes g)((1 \otimes a \otimes 1)(T_1\otimes T_2S_1\otimes S_2)(1 \otimes b \otimes 1))\\
& = &
(f\otimes \id\otimes g)(T_1\otimes \Phi_{\chi}(T_2S_1)\otimes S_2)\\
& = &
f(T_1) g(S_2)\Phi_{\chi}(T_2S_1)
=
\Phi_{\chi}((f\otimes\id)(T)(\id\otimes g)(S)).
\end{eqnarray*}
By linearity, (\ref{eq_fwgv}) holds true if $T$ (resp. $S$) belongs to the algebraic tensor product $\cl B(H)\odot \cl N$
(resp. $\cl N \odot \cl B(H)$).
Equation (\ref{eq_fwgv}) now follows from the weak* continuity of $\Phi_{\chi}$.

By linearity, (\ref{eq_slices}) holds whenever $\chi$ is an element of the algebraic tensor product
$\cl M\odot\cl M$. Now assume that $\chi\in \cl M\otimes_{\sigma\hh}\cl M$ is arbitrary.
Let $(\chi_{\alpha})_{\alpha\in \bb{A}}\subseteq \cl M\odot\cl M$ be a net converging to $\chi$ in the weak* topology of
$\cl M\otimes_{\sigma\hh}\cl M$. Then 
$$\Phi_{\chi_{\alpha}}((f\otimes\id)(W)(\id\otimes g)(V))\to_{\alpha\in \bb{A}}^{\rm WOT} \Phi_{\chi}((f\otimes\id)(W)(\id\otimes g)(V)).$$
On the other hand, the weak* continuity of one-sided operator multiplication and that of the maps
$\Gamma_W\otimes\Gamma_V'$, $\cl T$ and $f\otimes \id\otimes g$
imply that
\begin{eqnarray*}
& & (f\otimes \id\otimes g)(W_{1,2} (\cl T\circ (\Gamma_W\otimes\Gamma_V')(\chi_\alpha) )V_{2,3})  \to_{\alpha\in \bb{A}}^{w^*}\\
& &
(f\otimes \id\otimes g)(W_{1,2} (\cl T\circ (\Gamma_W\otimes\Gamma_V')(\chi) )V_{2,3}).
\end{eqnarray*}
Identity (\ref{eq_slices}) now follows.
\end{proof}

Let
$$\cl A(W,V) = \overline{[L_f(W) R_g(V) : f,g\in \cl B(H)_*]}^{\|\cdot \|};$$
we call $\cl A(W,V)$ the \emph{reduced operator space} of the pair $(W,V)$.
The assumptions $W\in \cl B(H)\bar\otimes\cl N$ and $V\in \cl N\bar\otimes \cl B(H)$
imply that $\cl A(W,V)$ is a (norm closed) subspace of $\cl N$.

\begin{theorem}\label{th_chargenn}
Let $H$ be a Hilbert space, $\cl M$ and $\cl N$ be von Neumann algebras acting on $H$,
$\chi\in \cl M\otimes_{\sigma\hh}\cl M$,
and $W \in \cl B(H)\bar\otimes\cl N$, $V \in \cl N \bar\otimes \cl B(H)$ be unitary operators.
The following are equivalent:

(i) \ $\Phi_{\chi}(\cl A(W,V))\subseteq \cl N$;

(ii) $\cl T\circ (\Gamma_W\otimes\Gamma'_V)(\chi) \in \cl B(H)\bar\otimes\cl N \bar\otimes\cl B(H)$.
\end{theorem}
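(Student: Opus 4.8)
The plan is to transfer condition (ii) to a condition on the ``middle slices'' of an associated operator and to recognise those slices through Lemma \ref{l_ident}. Write $\Xi := \cl T\circ(\Gamma_W\otimes\Gamma'_V)(\chi)$, an element of $\cl B(H)\bar\otimes\cl B(H)\bar\otimes\cl B(H)=\cl B(H\otimes H\otimes H)$. Since $W\in \cl B(H)\bar\otimes\cl N$, the operator $W_{1,2}=W\otimes 1$ lies in $\cl B(H)\bar\otimes\cl N\bar\otimes\cl B(H)$, and similarly $V_{2,3}=1\otimes V$ lies in $\cl B(H)\bar\otimes\cl N\bar\otimes\cl B(H)$; both are unitaries of this von Neumann algebra. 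Setting $\Psi := W_{1,2}\,\Xi\, V_{2,3}$ and using that $\cl B(H)\bar\otimes\cl N\bar\otimes\cl B(H)$ is an algebra closed under adjoints, the relations $\Psi=W_{1,2}\Xi V_{2,3}$ and $\Xi=W_{1,2}^*\Psi V_{2,3}^*$ show that (ii) is equivalent to $\Psi\in \cl B(H)\bar\otimes\cl N\bar\otimes\cl B(H)$.

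The key step is a slice description of this von Neumann algebra: for $\Psi\in \cl B(H\otimes H\otimes H)$ one has $\Psi\in \cl B(H)\bar\otimes\cl N\bar\otimes\cl B(H)$ if and only if $(f\otimes\id\otimes g)(\Psi)\in \cl N$ for all $f,g\in \cl B(H)_*$. To see this I would use the commutation theorem, which gives $(\cl B(H)\bar\otimes\cl N\bar\otimes\cl B(H))'=\bb{C}1\bar\otimes\cl N'\bar\otimes\bb{C}1$; by the double commutant theorem, $\Psi$ belongs to $\cl B(H)\bar\otimes\cl N\bar\otimes\cl B(H)$ exactly when it commutes with every $1\otimes y\otimes 1$, $y\in \cl N'$. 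A computation on elementary tensors, extended by the weak* continuity of the slice maps and of one-sided multiplication, yields
\[
(f\otimes\id\otimes g)\big([\Psi,\,1\otimes y\otimes 1]\big)=\big[(f\otimes\id\otimes g)(\Psi),\,y\big],\quad f,g\in \cl B(H)_*.
\]
As the product functionals $f\otimes\omega\otimes g$ are norm-dense in the predual $\cl B(H)_*\hat\otimes\cl B(H)_*\hat\otimes\cl B(H)_*$, the operator $[\Psi,1\otimes y\otimes 1]$ vanishes for all $y\in \cl N'$ if and only if each slice $(f\otimes\id\otimes g)(\Psi)$ commutes with all of $\cl N'$, that is, lies in $\cl N''=\cl N$; this gives the claimed characterisation.

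Finally I would feed Lemma \ref{l_ident} into this: it identifies the slices of $\Psi$ as $(f\otimes\id\otimes g)(\Psi)=\Phi_\chi(L_f(W)R_g(V))$. Hence $\Psi\in \cl B(H)\bar\otimes\cl N\bar\otimes\cl B(H)$ if and only if $\Phi_\chi(L_f(W)R_g(V))\in \cl N$ for all $f,g\in\cl B(H)_*$. Since $\Phi_\chi$ is bounded and $\cl N$ is norm-closed, this holds precisely when $\Phi_\chi$ maps the closed linear span $\cl A(W,V)$ of the operators $L_f(W)R_g(V)$ into $\cl N$, which is condition (i). Chaining the three equivalences, (ii) $\Leftrightarrow$ $\Psi\in\cl B(H)\bar\otimes\cl N\bar\otimes\cl B(H)$ $\Leftrightarrow$ (i), completes the proof.

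The step I expect to demand the most care is the slice characterisation of $\cl B(H)\bar\otimes\cl N\bar\otimes\cl B(H)$: one must justify the commutator--slicing identity rigorously (first on $\cl B(H)\odot\cl B(H)\odot\cl B(H)$, then by weak* continuity) and combine the commutation theorem with the density of product functionals to recover membership in the spatial tensor product from the behaviour of all slices. The surrounding reductions are formal, using only the unitarity of $W_{1,2},V_{2,3}$ and the boundedness of $\Phi_\chi$.
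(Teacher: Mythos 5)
Your proof is correct and follows essentially the same route as the paper: both reduce (ii) to membership of $W_{1,2}\,\Xi\,V_{2,3}$ in $\cl B(H)\bar\otimes\cl N\bar\otimes\cl B(H)$, identify its slices via Lemma \ref{l_ident}, and invoke the slice-map characterisation of the spatial tensor product. The only difference is that where the paper simply cites Tomiyama for that characterisation, you re-derive it from the commutation theorem; that argument is sound but not needed.
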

\begin{proof}
(i)$\Rightarrow$(ii)
By Lemma \ref{l_ident},
$$(f\otimes \id\otimes g)(W_{1,2} (\cl T\circ (\Gamma_W\otimes\Gamma'_V)(\chi) )V_{2,3}) \in \cl N$$
for all $f,g\in \cl B(H)_*$.
It follows from \cite{tom} that
$$W_{1,2} (\cl T\circ (\Gamma_W\otimes\Gamma'_V)(\chi) )V_{2,3} \in \cl B(H)\bar\otimes \cl N\bar\otimes \cl B(H).$$
Thus,
$$\cl T\circ (\Gamma_W\otimes\Gamma'_V)(\chi) \in \cl B(H)\bar\otimes \cl N\bar\otimes \cl B(H).$$

(ii)$\Rightarrow$(i) is immediate from Lemma \ref{l_ident}.
\end{proof}


\section{Applications to locally compact quantum groups}\label{s:LCQG}

We refer the reader to  \cite{kv, va} (see also \cite{junge-Neufang})
for background on (von Neumann algebraic) locally compact quantum groups.
In the sequel, we recall only those elements of the theory that will be essential for the
statements and the proofs of our results.
A locally compact quantum group is a quadruple
$\G = (L^\infty(\G),\Gamma, \varphi,\psi)$, where $L^\infty(\G)$ is a von
Neumann algebra equipped with a co-associative co-multiplication $\Gamma : L^\infty(\G)\to L^\infty(\G)\bar\otimes L^\infty(\G)$, and
$\varphi$ and $\psi$ are (normal faithful semifinite) left and right Haar weights on
$L^\infty(\G)$, respectively.  The left Haar weight induces an inner product
\begin{equation}\label{inner}
\langle x,y\rangle_\varphi = \varphi(y^*x)
\end{equation}
on the subspace
$\mathfrak R_\varphi=\{x\in L^\infty(\G): \varphi(x^*x)<\infty\}$.
We let $L^2(\G,\varphi)$ denote the completion of $\mathfrak R_\varphi$
with respect to the norm induced by (\ref{inner}).
We define the Hilbert space $L^2(\G,\psi)$ in a similar way.

Let $W\in \cl B(L^2(\G,\varphi)\otimes L^2(\G,\varphi))$
(resp. $V\in \cl B(L^2(\G,\psi)\otimes L^2(\G,\psi))$) be the
left (resp. right) fundamental unitary associated with $\G$
and note (see (\ref{eq_gammas})) that
$$\Gamma = \Gamma_W = \Gamma_V'.$$
Let $L^1(\G)$ be the predual of $L^\infty(\G)$.
The pre-adjoint of $\Gamma$ induces an associative completely contractive multiplication $\ast$ on $L^1(\G)$ by letting
$$ f_1\ast f_2 = (f_1\otimes f_2)\circ \Gamma.$$
The \emph{left regular representation} $\lambda : L^1(\G)\to\cl B(L^2(\G,\varphi))$ is defined by
  \begin{equation}
  \lambda(f) = L_f(W);
  \end{equation}
note that $\lambda$ is an injective completely contractive algebra homomorphism.
We have that $L^\infty(\hat\G) \stackrel{def}{=} \{\lambda(f) : f\in L^1(\G)\}^{''}$
is the von Neumann algebra associated to the dual quantum group $\hat\G$ of $\G$.
If $\Sigma$ is the flip operator on $L^2(\G, \varphi) \otimes L^2(\G, \varphi)$ and $\hat{W} = \Sigma W^* \Sigma$,
its co-multiplication $\hat{\Gamma}$ is given by
\[
\hat{\Gamma}(\hat{x})=\hat{W}^* (1 \otimes \hat{x}) \hat{W}, \quad \quad \hat{x} \in L^\infty(\hat{\G}).
\]
The norm closure
$$C_0(\hat\G) \stackrel{def}{=} \overline{\{\lambda(f) : f\in L^1(\G)\}}$$
is referred to as the \emph{reduced C*-algebra} associated with $\hat\bG$.

Analogously, the \emph{right regular representation} $\rho : L^1(\G) \to \cl B(L^2(\G,\psi))$ is given by
 $$\rho(g) = R_g(V), \ \ \ g\in L^1(\G);$$
note that $\rho$ is an injective completely contractive algebra homomorphism.
We have that
$L^\infty(\hat\G') = \{\rho(g): g\in L^1(\G)\}^{''}$ is the von Neumann algebra associated with
a quantum group denoted $\hat \G'$.
We note that
$L^\infty(\hat\G') = L^\infty(\hat\G)'$,
$W\in L^\infty(\G)\bar\otimes L^\infty(\hat\G)$ and $V\in L^\infty(\hat \G')\bar\otimes L^\infty(\G)$.

Let
$$\Gamma_{\rm op} : L^\infty(\G) \to L^\infty(\G) \bar{\otimes} L^\infty(\G)$$
be the map given by
$\Gamma_{\rm op}(x) = (\sigma\circ \Gamma)(x)$,
where
\begin{equation}\label{eq_flipp}
\sigma : L^\infty(\G) \bar{\otimes} L^\infty(\G) \to L^\infty(\G) \bar{\otimes} L^\infty(\G),  \ \ \
\sigma(T)  =  \Sigma T \Sigma,
\end{equation}
is the flip.
Note that $\Gamma_{\rm op} = \Gamma'_{\hat{W}}$.
It is known that $L^2(\G,\varphi)$ and $L^2(\G, \psi)$ can be canonically identified;
we use $L^2(\G)$ for this Hilbert space in the rest of this paper.

\begin{theorem}\label{t:(m)-iff-VN-invariant}
Let $\G$ be a   locally compact  quantum group and $\chi\in L^\infty(\G)\otimes_{\sigma\hh} L^\infty(\G)$.
The following are equivalent:

(i) \ $\Phi_{\chi}(C_0(\hat\bG))\subseteq L^{\infty}(\hat\bG)$;

(ii)
$\mathcal T\circ(\Gamma\otimes\Gamma_{\rm op})(\chi)\in L^\infty(\G)\bar\otimes 1 \bar\otimes L^\infty(\G)$.
\end{theorem}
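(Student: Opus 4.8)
The plan is to deduce this from the general Theorem~\ref{th_chargenn} by specialising the data and then translating the two conditions into their quantum-group form. Set $H = L^2(\G)$, $\cl M = L^\infty(\G)$ and $\cl N = L^\infty(\hat\G)$, and take the two unitaries of Theorem~\ref{th_chargenn} to be $W$ (the left fundamental unitary) and $\hat W = \Sigma W^*\Sigma$. As recorded in Section~\ref{s:LCQG}, $W\in L^\infty(\G)\bar\otimes L^\infty(\hat\G)\subseteq \cl B(H)\bar\otimes\cl N$ and, after flipping legs, $\hat W\in L^\infty(\hat\G)\bar\otimes L^\infty(\G)\subseteq \cl N\bar\otimes\cl B(H)$, so the hypotheses are met; moreover $\Gamma_W = \Gamma$ and $\Gamma'_{\hat W} = \Gamma_{\rm op}$. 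Theorem~\ref{th_chargenn} then asserts that $\Phi_{\chi}(\cl A(W,\hat W))\subseteq L^\infty(\hat\G)$ is equivalent to $\cl T\circ(\Gamma\otimes\Gamma_{\rm op})(\chi)\in \cl B(H)\bar\otimes L^\infty(\hat\G)\bar\otimes\cl B(H)$, and it remains to identify these with (i) and (ii).

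To match (i), I would show $\cl A(W,\hat W) = C_0(\hat\G)$. Each slice $L_f(W) = (f\otimes\id)(W)$ depends only on $f|_{L^\infty(\G)}\in L^1(\G)$ and equals $\lambda(f|_{L^\infty(\G)})$, so $\{L_f(W):f\in\cl B(H)_*\}$ has norm-closed linear span $C_0(\hat\G)$. Flipping legs gives $R_g(\hat W) = (\id\otimes g)(\Sigma W^*\Sigma) = (g\otimes\id)(W^*)$, which is the adjoint of a left slice of $W$ and therefore again lies in $C_0(\hat\G)$ with the same closed span, as $C_0(\hat\G)$ is self-adjoint. Hence every product $L_f(W)R_g(\hat W)$ lies in $C_0(\hat\G)$; conversely, a bounded approximate identity of the C*-algebra $C_0(\hat\G)$ together with norm-continuity of multiplication shows that these products are norm-dense, so $\cl A(W,\hat W)=C_0(\hat\G)$ and (i) is matched.

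For (ii), the key observation is that $\zeta := \cl T\circ(\Gamma\otimes\Gamma_{\rm op})(\chi)$ automatically lies in $L^\infty(\G)\bar\otimes L^\infty(\G)\bar\otimes L^\infty(\G)$. Indeed $\Gamma$ and $\Gamma_{\rm op}$ map $L^\infty(\G)$ into $L^\infty(\G)\bar\otimes L^\infty(\G)$, so for an elementary tensor $\chi = a\otimes b$ identity~(\ref{T:XY}) gives $\zeta = (\Gamma(a)\otimes 1)(1\otimes\Gamma_{\rm op}(b))\in L^\infty(\G)^{\bar\otimes 3}$; the general case follows from weak* density of $L^\infty(\G)\odot L^\infty(\G)$ in $L^\infty(\G)\otimes_{\sigma\hh} L^\infty(\G)$, weak* continuity of $\cl T\circ(\Gamma\otimes\Gamma_{\rm op})$, and weak* closedness of $L^\infty(\G)^{\bar\otimes 3}$. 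Granting this, Theorem~\ref{th_chargenn}(ii) reads $\zeta\in\big[\cl B(H)\bar\otimes L^\infty(\hat\G)\bar\otimes\cl B(H)\big]\cap\big[L^\infty(\G)^{\bar\otimes 3}\big]$.

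Finally I would evaluate this intersection. By the slice-map description of weak* spatial tensor products \cite{tom} and Tomiyama's commutation theorem, it equals $L^\infty(\G)\bar\otimes\big(L^\infty(\G)\cap L^\infty(\hat\G)\big)\bar\otimes L^\infty(\G)$, and invoking the known fact that $L^\infty(\G)\cap L^\infty(\hat\G)=\bb C 1$ this collapses to $L^\infty(\G)\bar\otimes 1\bar\otimes L^\infty(\G)$, which is exactly (ii). The main obstacle I anticipate is the clean identification $\cl A(W,\hat W)=C_0(\hat\G)$ — in particular checking that the right slices of $\hat W$ exhaust a dense subspace of $C_0(\hat\G)$ and that the closed span of products fills the whole reduced C*-algebra — together with isolating precisely the input $L^\infty(\G)\cap L^\infty(\hat\G)=\bb C 1$ that forces the middle leg to consist of scalars.
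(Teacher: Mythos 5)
Your proposal is correct and follows essentially the same route as the paper: specialise Theorem~\ref{th_chargenn} to $W$ and $\hat W$, identify $\cl A(W,\hat W)$ with $C_0(\hat\G)$, note that $\cl T\circ(\Gamma\otimes\Gamma_{\rm op})(\chi)$ lies in $L^\infty(\G)\bar\otimes L^\infty(\G)\bar\otimes L^\infty(\G)$, and intersect with $\cl B(H)\bar\otimes L^\infty(\hat\G)\bar\otimes\cl B(H)$ using Tomiyama's slice-map/intersection theorem together with $L^\infty(\G)\cap L^\infty(\hat\G)=\bb C1$. The only cosmetic differences are that the paper obtains $\cl A(W,\hat W)=C_0(\hat\G)$ by citing \cite{hnr} rather than your bounded-approximate-identity argument, and gets the membership in $L^\infty(\G)^{\bar\otimes 3}$ directly from Lemma~\ref{l:T} rather than by density from elementary tensors.
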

\begin{proof}
Note that, if $f,g\in L^1(\G)$ then
$(f\otimes \id)(W) = \lambda(f)$ and $(\id \otimes g)(\hat W) = \lambda(g)$.
Thus,
\[
\cl A(W,\hat W) = \overline{[\lambda(f)\lambda(g) : f,g\in L^1(\G)]}^{\| \cdot\|}
\]
and hence, by equation  \cite[2.5]{hnr},
$\cl A_{W,\hat W} = C_0(\hat\G)$.

By Theorem \ref{th_chargenn}, condition (i) is equivalent to the condition
$$\mathcal T\circ(\Gamma\otimes\Gamma_{\rm op})(\chi)\in \cl B(L^2(\G))\bar\otimes L^{\infty}(\hat\G) \bar\otimes \cl B(L^2(\G)).$$
On the other hand, by Lemma \ref{l:T},
$$\mathcal T\circ(\Gamma\otimes\Gamma_{\rm op})(\chi)\in L^{\infty}(\G)\bar\otimes L^{\infty}(\G) \bar\otimes L^{\infty}(\G).$$
Since $L^{\infty}(\hat\G)\cap L^{\infty}(\G) = \bb{C}1$, we conclude by \cite{tom}
and Theorem \ref{th_chargenn} that (i) and (ii) are equivalent.
\end{proof}

\begin{corollary}\label{c:(m)-iff-VN-invariant}
Let $\G$ be a   locally compact  quantum group and suppose that
$\chi\in L^\infty(\G)\otimes_{\eh} L^\infty(\G)$.
The following are equivalent:

(i) \ $\Phi_{\chi}(L^{\infty}(\hat\bG))\subseteq L^{\infty}(\hat\bG)$;

(ii) $\mathcal T\circ(\Gamma\otimes\Gamma_{\rm op})(\chi)\in L^\infty(\G)\bar\otimes 1 \bar\otimes L^\infty(\G)$.
\end{corollary}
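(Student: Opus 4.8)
The plan is to derive the corollary from Theorem~\ref{t:(m)-iff-VN-invariant}, using the additional hypothesis that $\chi$ lies in the extended Haagerup tensor product $L^\infty(\G)\otimes_{\eh}L^\infty(\G)$ rather than merely in the normal Haagerup tensor product. The point is that condition (ii) here is literally identical to condition (ii) of Theorem~\ref{t:(m)-iff-VN-invariant}; only the invariance condition (i) changes, with the reduced C*-algebra $C_0(\hat\G)$ replaced by the full von Neumann algebra $L^\infty(\hat\G)$. Thus everything reduces to relating the two versions of (i).

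For the implication (i)$\Rightarrow$(ii) I would use the trivial inclusion $C_0(\hat\G)\subseteq L^\infty(\hat\G)$: if $\Phi_\chi(L^\infty(\hat\G))\subseteq L^\infty(\hat\G)$, then a fortiori $\Phi_\chi(C_0(\hat\G))\subseteq L^\infty(\hat\G)$, which is precisely condition (i) of Theorem~\ref{t:(m)-iff-VN-invariant}; that theorem then yields (ii).

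For the converse (ii)$\Rightarrow$(i), Theorem~\ref{t:(m)-iff-VN-invariant} first gives $\Phi_\chi(C_0(\hat\G))\subseteq L^\infty(\hat\G)$, and here the hypothesis $\chi\in L^\infty(\G)\otimes_{\eh}L^\infty(\G)$ becomes essential. By the identification of the extended Haagerup tensor product with ${\rm CB}^{\sigma}_{L^\infty(\G)'}(\cl B(L^2(\G)))$ recalled in Section~\ref{s_ci}, the map $\Phi_\chi$ is weak* continuous. Since $C_0(\hat\G)=\overline{\{\lambda(f):f\in L^1(\G)\}}^{\|\cdot\|}$ is weak* dense in $L^\infty(\hat\G)=\{\lambda(f):f\in L^1(\G)\}''$, and the latter is weak* closed, I would argue as follows: given $x\in L^\infty(\hat\G)$, pick a net $(x_\alpha)\subseteq C_0(\hat\G)$ converging to $x$ weak*; then $\Phi_\chi(x_\alpha)\to\Phi_\chi(x)$ weak* by normality, and since each $\Phi_\chi(x_\alpha)\in L^\infty(\hat\G)$ while $L^\infty(\hat\G)$ is weak* closed, we conclude $\Phi_\chi(x)\in L^\infty(\hat\G)$. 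Hence $\Phi_\chi(L^\infty(\hat\G))\subseteq L^\infty(\hat\G)$, which is (i).

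The argument is short and presents no serious difficulty; the only delicate point is the passage from C*-algebraic invariance to von Neumann algebraic invariance in (ii)$\Rightarrow$(i), which rests simultaneously on the weak* continuity of $\Phi_\chi$ (supplied exactly by the $\otimes_{\eh}$ hypothesis) and on the weak* density of $C_0(\hat\G)$ in $L^\infty(\hat\G)$. Dropping either ingredient would break this implication, which is precisely why the corollary requires $\chi$ in the extended, rather than the normal, Haagerup tensor product.
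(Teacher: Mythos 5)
Your proposal is correct and follows essentially the same route as the paper, which likewise deduces the corollary from Theorem~\ref{t:(m)-iff-VN-invariant} by combining the weak* continuity of $\Phi_{\chi}$ (coming from $\chi\in L^\infty(\G)\otimes_{\eh}L^\infty(\G)$) with the weak* density of $C_0(\hat\G)$ in $L^\infty(\hat\G)$. You have merely written out the details that the paper leaves implicit, and your elaboration (including the use of the weak* closedness of $L^\infty(\hat\G)$) is accurate.
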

\begin{proof}
The statement follows from Theorem \ref{t:(m)-iff-VN-invariant},
the weak* continuity of $\Phi_{\chi}$ and the fact that $C_0(\hat\bG)$
is weak* dense in $L^\infty(\hat\G)$.
\end{proof}


\section{The co-commutative case}\label{ss_coc}

In this section, we specialise the results from Section \ref{s:LCQG} to co-commutative
locally compact quantum groups $\G$ \cite{kv}.
Let $G$ be a locally compact group equipped with left Haar measure.
We thus consider the case where $L^{\infty}(\G)$ coincides with the von Neumann algebra $\vn(G)$ of $G$,
acting on the Hilbert space $L^2(G)$.
We have that $C_0(\G)$ is equal to the reduced groups C*-algebra $C_r^*(G)$ of $G$.
We denote by $\lambda$ (resp. $\rho$) the left (resp. right) regular representation of $G$ on $L^2(G)$.
Using the canonical identification
\begin{equation}\label{eq_vnvn}
\vn(G)\bar\otimes \vn(G) \equiv \vn(G\times G),
\end{equation}
we have that the co-multiplication $\Gamma : \vn(G) \to \vn(G\times G)$ is given by
\[
\Gamma(\lambda(s)) = \lambda(s)\otimes\lambda(s), \ \ \ s\in G.
\]
In this case, $L^1(\G)$ coincides with the Fourier algebra $A(G)$ of $G$.
We refer the reader to \cite{eymard} for the basic concepts and results from Abstract Harmonic Analysis that will be used in this section,
but note here that $A(G)$ consists of (complex valued) continuous functions on $G$ vanishing at infinity,
the operation is point-wise, and its Gelfand spectrum can be homeomorphically identified with $G$.

Note that, in the case under consideration, $L^{\infty}(\hat\G)$ coincides with $L^{\infty}(G)$, 
which we hereafter view as the von Neumann algebra
of operators on $L^2(G)$ of multiplication by the corresponding essentially bounded functions.
Under this identification, 
$C_0(\hat\G)$ coincides with the subalgebra $C_0(G)$ of all continuous functions vanishing at infinity.

Equation (\ref{eq_vnvn}) allows us to identify the predual $A(G\times G)$ of $\vn(G\times G)$ with the operator projective
tensor product $A(G)\hat{\otimes} A(G)$.
Note that the pre-adjoint of $\Gamma$ is the associative multiplication map
$\m_{A} : A(G)\hat\otimes A(G)\to A(G)$ given by
$\m_{A}(f\otimes g) = fg$.

We set
$$A_{\hh}(G) = A(G)\otimes_{\hh} A(G) \ \mbox{ and } \ A_{\eh}(G) = A(G)\otimes_{\eh} A(G);$$
note that there is a completely isometric inclusion $A_{\hh}(G)\subseteq A_{\eh}(G)$ \cite{er}.
We have completely isometric identifications
$$A_{\hh}(G)^* = \vn(G)\otimes_{\eh} \vn(G) \ \mbox{ and } \ A_{\eh}(G)^* = \vn(G)\otimes_{\sigma \hh} \vn(G).$$
In \cite{bi}, $A_{\eh}(G)$ (resp. $A_{\hh}(G)$) was identified with a completely contractive Banach algebra of
separately (resp. jointly) continuous complex-valued functions on $G\times G$, equipped with pointwise multiplication;
specifically, the function, corresponding to an element $v\in A_{\eh}(G)$
is (denoted in the same fashion and) given by
$$v(s,t) = \langle \lambda_s\otimes \lambda_t, v\rangle, \ \ \ s,t\in G.$$
Note that $A_{\hh}(G)$ is a regular (semi-simple) Banach algebra whose Gelfand spectrum
can be homeomorphically identified with $G\times G$.

Recall that the multiplication map $\m : \vn(G)\times \vn(G) \to \vn(G)$ (given by $\m(S,T) = ST$)
extends uniquely to a weak* continuous completely contractive map (denoted in the same fashion)
$\m : \vn(G)\otimes_{\rm \sigma h} \vn(G)\to \vn(G)$.
If $\m_*:A(G)\to A(G)\otimes_{\eh}A(G)$ is the pre-adjoint of $\m$ then
$$\m_*(f)(s,t) = f(st), \ \ \ f\in A(G), s,t\in G.$$

M. Daws has shown \cite{daws} that, if $f\in A(G)$ then $\m_*(f)$ belongs to the
algebra $M^{\rm{cb}} A_{\hh}(G)$ of completely bounded multipliers \cite{bi} of $A_{\hh}(G)$; in particular,
$$\m_*(f) u \in A_{\hh}(G), \ \mbox{ whenever } u \in A_{\hh}(G).$$
Moreover,
\begin{equation}\label{eq:m-*}
\m_* : A(G) \to M^{\rm{cb}} A_{\hh}(G)
\end{equation}
is completely contractive (we refer the reader to \cite[Theorem~9.2]{daws} and the remark after its proof).

Theorem \ref{t:(m)-iff-VN-invariant} specialises in the case under consideration to the following result,
providing a characterisation of the completely bounded maps on $\cl B(L^2(G))$ that
are not necessarily normal and which map $C_0(G)$ into $L^{\infty}(G)$.

\begin{corollary}\label{c_VN-invariant}
Let $G$ be a   locally compact group and $\chi\in \vn(G)\otimes_{\sigma\hh} \vn(G)$.
The following are equivalent:

(i) \ $\Phi_{\chi}(C_0(G))\subseteq L^{\infty}(G)$;

(ii)
$\mathcal T\circ(\Gamma\otimes\Gamma_{\rm op})(\chi)\in \vn(G)\bar\otimes 1 \bar\otimes \vn(G)$.
\end{corollary}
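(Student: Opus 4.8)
The plan is to obtain Corollary \ref{c_VN-invariant} as a direct specialisation of Theorem \ref{t:(m)-iff-VN-invariant} to the co-commutative case, where $L^\infty(\G) = \vn(G)$ and $\hat\G$ is the commutative dual. The essential observation is that all the objects appearing in the general theorem have concrete interpretations here: $L^\infty(\hat\G)$ becomes $L^\infty(G)$ (acting by multiplication on $L^2(G)$), and $C_0(\hat\G)$ becomes $C_0(G)$, as recorded in the two paragraphs preceding the corollary. Thus I would first note that, under these identifications, condition (i) of Theorem \ref{t:(m)-iff-VN-invariant}, namely $\Phi_\chi(C_0(\hat\bG)) \subseteq L^\infty(\hat\bG)$, reads verbatim as $\Phi_\chi(C_0(G)) \subseteq L^\infty(G)$, which is exactly condition (i) of the corollary.

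\smallskip

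Next I would check that condition (ii) transcribes correctly. In Theorem \ref{t:(m)-iff-VN-invariant}, condition (ii) is $\mathcal T\circ(\Gamma\otimes\Gamma_{\rm op})(\chi)\in L^\infty(\G)\bar\otimes 1 \bar\otimes L^\infty(\G)$. Since $L^\infty(\G) = \vn(G)$ in the present setting, this is literally $\mathcal T\circ(\Gamma\otimes\Gamma_{\rm op})(\chi)\in \vn(G)\bar\otimes 1 \bar\otimes \vn(G)$, which is condition (ii) of the corollary. Here $\Gamma$ is the co-multiplication on $\vn(G)$ determined by $\Gamma(\lambda(s)) = \lambda(s)\otimes\lambda(s)$, and $\Gamma_{\rm op} = \sigma\circ\Gamma$ is its opposite, both of which are the concrete incarnations of the maps appearing in the general statement. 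The map $\mathcal T$ is the one furnished by Lemma \ref{l:T} applied to $\cl M = \vn(G)$.

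\smallskip

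The only genuine content, therefore, is to verify that the hypotheses of Theorem \ref{t:(m)-iff-VN-invariant} are met and that the identifications are compatible. The symbol $\chi$ ranges over $\vn(G)\otimes_{\sigma\hh}\vn(G) = L^\infty(\G)\otimes_{\sigma\hh}L^\infty(\G)$, exactly the domain required by the theorem, so no restriction is lost. I would state plainly that the co-commutative quantum group $\G$ associated with $G$ fits the framework of Section \ref{s:LCQG}, with the left and right fundamental unitaries $W$ and $V$ giving $\Gamma = \Gamma_W = \Gamma_V'$ and $\Gamma_{\rm op} = \Gamma'_{\hat W}$ as explained there. Once these identifications are in place the two conditions of the corollary coincide term-by-term with those of the theorem.

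\smallskip

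I do not anticipate a serious obstacle, since the corollary is a transcription rather than a new argument; the one point requiring a line of care is confirming that $C_0(\hat\G)$ and $L^\infty(\hat\G)$ specialise to $C_0(G)$ and $L^\infty(G)$ respectively (already established in the discussion preceding the statement, using the identification of $\hat\G$ with the commutative quantum group of $G$ and the fact that $\lambda$ on $L^1(\G) = A(G)$ realises these algebras). With that in hand, the proof reduces to the single sentence: the claim is the co-commutative instance of Theorem \ref{t:(m)-iff-VN-invariant}.
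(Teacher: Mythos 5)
Your proposal is correct and matches the paper exactly: the corollary is obtained there with no separate argument, simply as the specialisation of Theorem \ref{t:(m)-iff-VN-invariant} to the co-commutative quantum group, using the identifications $L^\infty(\G)=\vn(G)$, $L^\infty(\hat\G)=L^\infty(G)$ and $C_0(\hat\G)=C_0(G)$ recorded in the preceding paragraphs. Your transcription of both conditions is accurate, so nothing further is needed.
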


If $\chi\in \vn(G)\otimes_{\sigma \hh}\vn(G)$ and $v\in A_{\eh}(G)$, we let $v\cdot \chi$ be the (unique) element of
$\vn(G)\otimes_{\sigma \hh}\vn(G)$ such that
$$\langle v\cdot \chi, w\rangle = \langle \chi, vw\rangle, \ \ \ w\in A_{\eh}(G).$$
If $\chi\in \vn(G)\otimes_{\eh} \vn(G)$ and $v\in A_{\hh}(G)$ then
$v\cdot \chi$ denotes the similarly defined element of $\vn(G)\otimes_{\eh} \vn(G)$.
If $\chi\in \vn(G)\otimes_{\eh}\vn(G)$, we
denote by $\supp_{\eh}(\chi)$ the support of $\chi$ when the latter
is viewed as a functional on $A_{\hh}(G)$;
thus,
$$\supp\mbox{}_{\eh}(\chi) = \{z\in G\times G : v\in A_{\hh}(G), v(z) \neq 0 \  \Longrightarrow \ v\cdot \chi \neq 0\}$$
(see \cite{eymard} and \cite{katz}).
Let $\frak{p} : \vn(G)\otimes_{\sigma \hh}\vn(G)\to \vn(G)\otimes_{\eh}\vn(G)$
be the natural projection given by $\frak{p}(\chi) = \chi|_{A_{\hh}(G)}$
(or, equivalently, the dual of the inclusion map $A_{\hh}(G)\to A_{\eh}(G)$),
and define $\supp_{\sigma}(\chi) = \supp_{\eh}(\frak{p}(\chi))$.

If $E\subseteq G \times G$ is a closed set, let
$$J_{\hh}(E) = \{u\in A_{\hh}(G) : u \mbox{ has compact support, disjoint from } E\overline{\}}^{\|\cdot\|},$$
and write $J_{\hh}(E)^{\perp,\eh}$ (resp. $J_{\hh}(E)^{\perp,\sigma}$) for its annihilator in
$\vn(G)\otimes_{\eh}\vn(G)$ (resp. $\vn(G)\otimes_{\sigma\hh}\vn(G)$).
It is well-known that
\begin{equation}\label{eq_hhan}
J_{\hh}(E)^{\perp,\eh} = \{\chi \in \vn(G)\otimes_{\eh}\vn(G) : \supp\mbox{}_{\eh}(\chi)\subseteq E\};
\end{equation}
the following, similar, description of $J_{\hh}(E)^{\perp,\sigma}$ is straightforward from the definitions.

\begin{remark}\label{l_supsig}
Let $E$ be a closed subset of $G\times G$. Then
$$J_{\hh}(E)^{\perp,\sigma} = \{\chi \in \vn(G)\otimes_{\sigma\hh}\vn(G) : \supp\mbox{}_{\sigma}(\chi)\subseteq E\}.$$
\end{remark}

Let
$$\nabla = \{(s,s^{-1}) : s\in G\}$$
be the \emph{anti-diagonal} of $G\times G$.
Recall the flip $\sigma : \vn(G\times G) \to \vn(G\times G)$ defined in (\ref{eq_flipp}) and note that
\[
(\id \otimes \sigma)\big(a \otimes b \otimes c\big) = a \otimes c \otimes b, \ \ \ a,b,c  \in \vn(G).
\]
In condition (iii) of the next theorem, we identify $\vn(G)\otimes_{\eh}\vn(G)$ with a subspace of $\vn(G\times G)$
(see \cite[Corollary 3.8]{blecher}).

\begin{theorem}\label{delta_prop}
Let $\chi\in \vn(G)\otimes_{\sigma\hh} \vn(G)$.
The following are equivalent:

(i) \ $\mathcal T\circ(\Gamma\otimes\Gamma) (\chi) \in \vn(G) \bar\otimes 1 \bar\otimes \vn(G)$;

(ii) $\supp_{\sigma}(\chi) \subseteq  \nabla$.

\noindent If $\chi\in \vn(G)\otimes_{\eh} \vn(G)$ then these conditions are also equivalent to

(iii) \ $\mathcal T\circ(\Gamma\otimes\Gamma) (\chi) = (\id \otimes \sigma)\big(\chi\otimes 1 \big)$.
\end{theorem}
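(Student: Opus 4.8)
The plan is to establish the three equivalences in Theorem \ref{delta_prop} by combining Corollary \ref{c_VN-invariant} (applied in the co-commutative case where $\Gamma_{\rm op}=\Gamma$) with the support-theoretic description of $J_{\hh}(\nabla)^{\perp,\sigma}$ from Remark \ref{l_supsig}. First I would observe that, since $\G$ is co-commutative, the two fundamental unitaries and comultiplications coincide in the relevant sense, so the output of $\cl T\circ(\Gamma\otimes\Gamma)$ lies in $\vn(G)\bar\otimes\vn(G)\bar\otimes\vn(G)\equiv\vn(G\times G\times G)$ by Lemma \ref{l:T}.

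For the equivalence of (i) and (ii), the key computation is to pin down exactly which three-variable element $\cl T\circ(\Gamma\otimes\Gamma)(\chi)$ is. Using the formula (\ref{T:XY}) together with $\Gamma(\lambda(s))=\lambda(s)\otimes\lambda(s)$, on a generator $\chi=\lambda(s)\otimes\lambda(t)$ one computes
\[
\cl T\circ(\Gamma\otimes\Gamma)(\lambda(s)\otimes\lambda(t))
=(\lambda(s)\otimes\lambda(s)\otimes 1)(1\otimes\lambda(t)\otimes\lambda(t))
=\lambda(s)\otimes\lambda(st)\otimes\lambda(t).
\]
Testing this against $A(G\times G\times G)$ and using that $A_{\hh}(G)$ has spectrum $G\times G$, the condition that the middle leg be scalar, i.e.\ membership in $\vn(G)\bar\otimes 1\bar\otimes\vn(G)$, should translate (via the pairing $\langle\cl T\circ(\Gamma\otimes\Gamma)(\chi),f\otimes\omega\otimes g\rangle$ and the multiplication formula $\m_*(\omega)(s,t)=\omega(st)$) into the vanishing of $\chi$ against every $v\in A_{\hh}(G)$ whose associated function is supported off $\nabla$. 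Concretely, I would show that (i) holds if and only if $\chi$ annihilates $J_{\hh}(\nabla)$, and then invoke Remark \ref{l_supsig} to rewrite this as $\supp_{\sigma}(\chi)\subseteq\nabla$. The natural mechanism is that the middle slice, evaluated via $\m_*$, sees the value $\omega(st)$, so requiring it to factor through the scalar forces the symbol to be concentrated where $st$ is unconstrained only on the set $\{st=e\}=\nabla$; making this ``only if'' direction precise is the main obstacle, since one must pass from a statement about all functionals $f\otimes\omega\otimes g$ to the support condition through the duality between $A_{\hh}(G)$ and $\vn(G)\otimes_{\eh}\vn(G)$ and the projection $\frak{p}$.

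For the final equivalence, under the stronger hypothesis $\chi\in\vn(G)\otimes_{\eh}\vn(G)$ I would upgrade (i) to the explicit identity (iii). Here the weak* continuity in the $\eh$-setting lets one work with the convergent representation $\chi\sim\sum_i a_i\otimes b_i$ and compute $\cl T\circ(\Gamma\otimes\Gamma)(\chi)$ termwise; comparing with $(\id\otimes\sigma)(\chi\otimes 1)=\sum_i a_i\otimes 1\otimes b_i$, one sees on generators (from the display above) that the two agree precisely when the middle leg $\lambda(st)$ collapses to $1$, which by the support computation is exactly condition (ii). Thus (iii) is the concrete realisation of (i) once the support is contained in $\nabla$: the middle variable $\lambda(st)=\lambda(e)=1$ on $\nabla$, and the flip $(\id\otimes\sigma)$ records that the surviving outer legs are the original ones of $\chi$. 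I expect the verification that (i)$\Leftrightarrow$(iii) in the $\eh$-case reduces, via continuity and the identification of $\vn(G)\otimes_{\eh}\vn(G)$ inside $\vn(G\times G)$, to checking the two expressions agree on the weak* dense span of generators, which the generator computation already handles.
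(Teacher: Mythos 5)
Your overall strategy coincides with the paper's: compute $\cl T\circ(\Gamma\otimes\Gamma)$ on generators to get $\lambda(s)\otimes\lambda(st)\otimes\lambda(t)$, deduce by weak* density the pairing identity $\langle \cl T\circ(\Gamma\otimes\Gamma)(\chi), f\otimes\omega\otimes g\rangle = \langle \m_*(\omega)\cdot\chi, f\otimes g\rangle$, and translate the requirement that the middle leg be scalar into the support condition via Remark \ref{l_supsig}. However, there is a genuine gap, and it sits in the direction you do \emph{not} flag as difficult, namely (ii)$\Rightarrow$(i). Knowing that $\frak{p}(\chi)$ annihilates $J_{\hh}(\nabla)$ only lets you kill elements of $A_{\hh}(G)$ that are (norm limits of) compactly supported elements with support \emph{disjoint} from $\nabla$. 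The element you actually need to dispose of is $(\m_*(\omega)-\omega(e)1)(f\otimes g)$, whose associated function $(\omega(st)-\omega(e))f(s)g(t)$ vanishes \emph{on} $\nabla$ but not on a neighbourhood of it; moreover $\m_*(\omega)$ is not itself an element of $A_{\hh}(G)$, so it is not even clear a priori that $\m_*(\omega)(f\otimes g)$ lies in $A_{\hh}(G)$ and can be paired with $\frak{p}(\chi)$. The paper's proof of this implication rests on two nontrivial external inputs that your plan never mentions: Daws' theorem that $\m_*$ maps $A(G)$ completely contractively into $M^{\cb}A_{\hh}(G)$ (so that $\m_*(\psi)u\in A_{\hh}(G)$ with norm control), and the fact that $\{e\}$ is a set of spectral synthesis for $A(G)$ (so that $\omega-h$, with $h\in A(G)$ equal to $1$ on a suitable compact set containing $e$, can be approximated by functions $h_n$ vanishing on a neighbourhood of $e$, whence $\m_*(h_n)u\in J_{\hh}(\nabla)$). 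Without these the passage from the support hypothesis to $\langle\chi,\m_*(\omega)(f\otimes g)\rangle=\omega(e)\langle\chi,f\otimes g\rangle$ does not go through.

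Two further points. For (i)$\Rightarrow$(ii), which you acknowledge as an obstacle but leave open, the paper's device is elementary: for $(s,t)$ with $st\neq e$, choose $f,g$ with $f(s)g(t)\neq 0$ and $\omega_1,\omega_2\in A(G)$ with $\omega_1(e)=\omega_2(e)$ but $\omega_1(st)\neq\omega_2(st)$; hypothesis (i) forces $(\m_*(\omega_1)-\m_*(\omega_2))(f\otimes g)\cdot\frak{p}(\chi)=0$ while this element of $A_{\hh}(G)$ is nonzero at $(s,t)$, so $(s,t)\notin\supp_{\eh}(\frak{p}(\chi))$. Finally, for (iii) your proposal to verify the identity ``on the weak* dense span of generators'' is not sound: a $\chi$ satisfying (ii) is not a limit of generators $\lambda(s)\otimes\lambda(t)$ each satisfying (ii), so a termwise comparison on generators proves nothing about such $\chi$. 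The correct route is the one the paper takes: once (ii) holds, the slice identity $(f\otimes\id\otimes g)(\cl T\circ(\Gamma\otimes\Gamma)(\chi))=\langle\chi,f\otimes g\rangle 1$ holds for all $f,g\in A(G)$, and when $\chi\in\vn(G)\otimes_{\eh}\vn(G)\subseteq\vn(G\times G)$ these slices also compute $(\id\otimes\sigma)(\chi\otimes 1)$, which yields (iii).
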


\begin{proof}
Assume that $\chi = \lambda_s\otimes \lambda_t$, where $s,t\in G$.
For $f,g,\omega \in A(G)$, using Lemma~\ref{l:T}, we obtain
\begin{eqnarray*}
\langle \mathcal T\circ(\Gamma\otimes\Gamma)(\chi), f\otimes \omega \otimes g\rangle
& = &
\langle \mathcal T(\lambda_s\otimes\lambda_s\otimes\lambda_t \otimes \lambda_t), f\otimes \omega \otimes g \rangle\\
& = &
\langle \lambda_s\otimes\lambda_{st}\otimes\lambda_t, f\otimes \omega \otimes g \rangle\\
& = &
f(s) \omega(st) g(t) =  f(s) \m_*(\omega)(s,t) g(t)\\
& = &
\langle \lambda_s\otimes\lambda_t, \m_*(\omega)(f\otimes g)\rangle\\
& = &
\langle \chi, \m_*(\omega)(f\otimes g)\rangle\\
& = &
\langle \m_*(\omega) \cdot \chi, f\otimes g\rangle.
\end{eqnarray*}

It is easy to see that the linear span of $\{\lambda_s\otimes \lambda_t : s,t \in G\}$ is weak* dense in $\vn(G)\otimes_{\sigma\hh}\vn(G)$.
Further, since the map $\psi \to \m_*(\omega) \cdot \psi$ on  $\vn(G)\otimes_{\sigma\hh}\vn(G)$
is weak* continuous,
we have that
\begin{equation}\label{eq_fomegag}
\langle \mathcal T\circ(\Gamma\otimes\Gamma)(\chi), f\otimes \omega \otimes g\rangle
= \langle \m_*(\omega) \cdot \chi, f\otimes g\rangle,
\end{equation}
for all $\chi\in \vn(G)\otimes_{\sigma\hh}\vn(G)$ and all $f,g,\omega\in A(G)$.

\medskip

(i)$\Rightarrow$(ii)
By assumption, there exists $\nph\in \vn(G)\bar\otimes\vn(G)$ such that
$$\mathcal T\circ(\Gamma\otimes\Gamma) (\chi) = (\id \otimes \sigma)(\nph\otimes 1).$$
Thus, if $f,g,\omega\in A(G)$ then (\ref{eq_fomegag}) implies
\begin{equation}\label{eq_mstar}
\langle \m_*(\omega) \cdot \chi, f\otimes g\rangle = \langle (\id \otimes \sigma)(\nph\otimes 1), f\otimes \omega \otimes g\rangle
= \langle \omega(e) \nph, f\otimes g\rangle.
\end{equation}

Fix $(s,t)\in G\times G$ such that $st\neq e$. Let $f,g\in A(G)$ be such that
$f(s) g(t) \neq 0$. Choose $\omega_1, \omega_2\in A(G)$ such that
$\omega_1(e) = \omega_2(e)$ but $\omega_1(st) \neq \omega_2(st)$.
Thus, 
\begin{equation}\label{eq_nonz}
(\m_*(\omega_1) - \m_*(\omega_2))(f\otimes g)(s,t)\neq 0.
\end{equation}
By (\ref{eq_mstar}), for all $f_0, g_0\in A(G)$, we have
\begin{eqnarray*}
& &
\langle (\m_*(\omega_1) - \m_*(\omega_2))(f\otimes g) \cdot \chi, f_0\otimes g_0\rangle\\
& = &
\langle (\m_*(\omega_1) - \m_*(\omega_2)) \cdot \chi, (f f_0) \otimes (g g_0) \rangle\\
& = &
\langle (\omega_1(e) - \omega_2(e)) \nph, (f f_0) \otimes (g g_0) \rangle = 0.
\end{eqnarray*}
We have that $(\m_*(\omega_1) - \m_*(\omega_2))(f\otimes g) \in A_{\hh}(G)$ and hence
$$\langle (\m_*(\omega_1) - \m_*(\omega_2))(f\otimes g) \cdot \frak{p}(\chi), f_0\otimes g_0\rangle = 0.$$
Since $A(G)\odot A(G)$ is dense in $A_{\hh}(G)$, we conclude that
$$(\m_*(\omega_1) - \m_*(\omega_2))(f\otimes g) \cdot \frak{p}(\chi) = 0.$$
In view of (\ref{eq_nonz}), 
this shows that $(s,t)\not\in \supp_{\eh}(\frak{p}(\chi))$, and thus $\supp_{\sigma}(\chi)\subseteq \nabla$.

(ii)$\Rightarrow$(i)
Assume that $\supp_\sigma(\chi) \subseteq \nabla$.  By Remark \ref{l_supsig}, 
\[
\langle \frak{p}(\chi),u \rangle = 0 \text{ for any }u \in \ahg\cap \ C_c(G\times G) \mbox{ with } \supp(u) \cap \nabla = \emptyset.
\]
Let $u \in \ahg\cap \ C_c(G\times G)$.  Choose $h \in A(G)$
that takes the value 1 on the (compact) set $\{st : (s,t)\in\supp u\} \cup \{e\}$.
Since $\{e\}$ is a set of synthesis for $A(G)$,
for every $\omega \in A(G)$ with $\omega(e) = 1$, there exists $h_n \in A(G)\cap C_c(G)$, $n\in\mathbb N$, vanishing on
a neighbourhood of $\{e\}$, such that
\[
\|\omega - h - h_n\|_{A(G)}\to_{n\to\infty} 0.
\]
Since $\m_*$ is a complete contraction from $A(G)$ into $M^{\cb}A_{\hh}(G)$ \cite{daws}, we have
that $\m_*(\omega - h - h_n) \in M^{\rm cb}\ahg$ and
\[
\|\m_*(\omega - h - h_n)u\|_{\hh} \leq \|\omega - h - h_n\|_{A(G)}\| u \|_{\hh}\to_{n\to\infty} 0.
\]
Since $\supp (\m_*(h_n) u )\cap \nabla = \emptyset$, we have that
$\langle \frak{p}(\chi), \m_*(h_n)u \rangle = 0$. On the other hand, $\m_*(h) u = u$ and hence
\begin{eqnarray*}
\langle \frak{p}(\chi), (\m_*(\omega) - 1)u \rangle
& = & \langle\frak{p}(\chi), (\m_*(\omega)-\m_*(h))u\rangle\\
& = & \langle\frak{p}(\chi), (\m_*(\omega-h- h_n)u \rangle \to_{n\to\infty} 0,
\end{eqnarray*}
i.e. $\langle\frak{p}(\chi), (\m_*(\omega) - 1)u \rangle = 0$.
In particular,
since $\m_*(\omega) (f\otimes g)\in A_{\hh}(G)$, we obtain
\[
\langle \chi,\m_*(\omega) (f\otimes g)\rangle =
\langle\frak{p}(\chi),\m_*(\omega) (f\otimes g)\rangle = \langle\frak{p}(\chi),f\otimes g\rangle
= \langle \chi,f\otimes g\rangle
\]
for all $f$, $g\in A(G)\cap C_c(G)$.
Using (\ref{eq_fomegag}), we obtain that, if $f,g,\omega\in A(G)$ then
\begin{eqnarray*}
\langle (f\otimes \id \otimes g)(\mathcal T\circ(\Gamma\otimes\Gamma)(\chi)), \omega \rangle
& = &
\langle \mathcal T\circ(\Gamma\otimes\Gamma)(\chi), f\otimes \omega \otimes g\rangle\\
& = &
\omega(e) \langle \chi,f\otimes g\rangle
= \langle \langle \chi,f\otimes g\rangle 1,\omega \rangle.
\end{eqnarray*}
Thus,
\begin{equation}\label{eq_fgI}
(f\otimes \id \otimes g)(\mathcal T\circ(\Gamma\otimes\Gamma)(\chi)) = \langle \chi,f\otimes g\rangle 1, \ \ \ f,g\in A(G).
\end{equation}
It follows that $\mathcal T\circ(\Gamma\otimes\Gamma)(\chi)) \in \vn(G) \bar\otimes 1 \bar\otimes \vn(G)$.

Now suppose that $\chi\in \vn(G)\otimes_{\eh} \vn(G)$.
In this case, (\ref{eq_fgI}) can be rewritten as
$$\langle \mathcal T\circ(\Gamma\otimes\Gamma)(\chi), f\otimes \omega \otimes g\rangle =
\langle (\id \otimes \sigma)\big(\chi\otimes 1\big), f\otimes \omega \otimes g\rangle,$$
completing the proof.
\end{proof}

In the next corollary, $M(G)$ denotes the Banach algebra of all complex Borel measures on $G$.
The equivalence (i)$\Leftrightarrow$(ii) extends \cite[Theorem 6.10]{bi}.
The integral in (iii) is understood in the weak sense.

\begin{corollary}
Let $\chi \in \vn(G)\otimes_{\eh} \vn(G)$. The following are equivalent

(i) \ \ $\Phi_{\chi} \in {\rm CB}_{\vn(G)'}^{\sigma, L^\infty(G)}(\cl B(L^2(G)))$;

(ii) \ $\supp\chi\subseteq \nabla$;

(iii) there exists $\mu\in M(G)$ such that $\Phi_{\chi}(x) = \int_G\lambda_sx\lambda_s^*d\mu(s)$, $x\in \cl B(L^2(G))$.
\end{corollary}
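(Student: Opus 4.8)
The plan is to deduce the final corollary from the results already established, chiefly Theorem~\ref{t:(m)-iff-VN-invariant}, Corollary~\ref{c:(m)-iff-VN-invariant} and Theorem~\ref{delta_prop}, applied in the co-commutative setting where $L^\infty(\G)=\vn(G)$, $L^\infty(\hat\G)=L^\infty(G)$ and $\Gamma_{\rm op}=\Gamma$. First I would observe that, in this case, saying $\Phi_\chi\in{\rm CB}_{\vn(G)'}^{\sigma,L^\infty(G)}(\cl B(L^2(G)))$ means precisely that $\Phi_\chi$ is a normal completely bounded $\vn(G)'$-bimodule map leaving $L^\infty(G)=L^\infty(\hat\G)$ invariant. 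Since $\chi\in\vn(G)\otimes_{\eh}\vn(G)$, the associated map $\Phi_\chi$ is automatically normal and completely bounded, and a $\vn(G)'$-bimodule map; so condition (i) reduces to the single requirement $\Phi_\chi(L^\infty(\hat\G))\subseteq L^\infty(\hat\G)$. That is exactly condition~(i) of Corollary~\ref{c:(m)-iff-VN-invariant}.

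For the equivalence (i)$\Leftrightarrow$(ii), I would chain together the characterisations already proved. By Corollary~\ref{c:(m)-iff-VN-invariant}, condition~(i) is equivalent to
\[
\mathcal T\circ(\Gamma\otimes\Gamma_{\rm op})(\chi)\in\vn(G)\bar\otimes 1\bar\otimes\vn(G),
\]
which, since $\Gamma_{\rm op}=\Gamma$ in the co-commutative case, is exactly condition~(i) of Theorem~\ref{delta_prop}. Theorem~\ref{delta_prop} then gives that this is equivalent to $\supp_\sigma(\chi)\subseteq\nabla$. Finally, because $\chi$ lies in the smaller space $\vn(G)\otimes_{\eh}\vn(G)$, the projection $\frak p$ acts as the identity on $\chi$, so $\supp_\sigma(\chi)=\supp_{\eh}(\frak p(\chi))=\supp_{\eh}(\chi)$, which in the notation of the corollary is simply $\supp\chi$. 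This yields (i)$\Leftrightarrow$(ii).

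The remaining work is the equivalence with condition~(iii), the integral representation against a measure $\mu\in M(G)$. For (iii)$\Rightarrow$(i) the argument is direct: if $\Phi_\chi(x)=\int_G\lambda_s x\lambda_s^*\,d\mu(s)$, then each conjugation $x\mapsto\lambda_s x\lambda_s^*$ preserves $L^\infty(G)$ (as $\lambda_s$ normalises the multiplication algebra, translating functions), so the weak integral preserves $L^\infty(G)$ as well; normality, complete boundedness and the $\vn(G)'$-bimodule property are equally clear from the formula, giving~(i). The substantive direction is (ii)$\Rightarrow$(iii), and this is where I expect the main obstacle to lie. Here one must manufacture a measure $\mu$ out of the support condition $\supp\chi\subseteq\nabla$. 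The natural route is to use condition~(iii) of Theorem~\ref{delta_prop}, namely $\mathcal T\circ(\Gamma\otimes\Gamma)(\chi)=(\id\otimes\sigma)(\chi\otimes 1)$, to pin down the structure of $\chi$ as an element of $\vn(G)\otimes_{\eh}\vn(G)\subseteq\vn(G\times G)$ concentrated on the anti-diagonal. An element supported on $\nabla=\{(s,s^{-1}):s\in G\}$ should, via the identification of $\nabla$ with $G$ and the known correspondence between elements of $\vn(G)$ supported appropriately and measures, correspond to a bounded measure $\mu$; the cleanest path is likely to invoke Neufang's identification of such maps with measures (cited earlier from \cite{neufang, nrs}) together with the support characterisation, thereby reconstructing $\mu$ and verifying $\Phi_\chi(\lambda_s x\lambda_s^*)$-type formulas on a weak*-dense set before passing to the limit by normality.
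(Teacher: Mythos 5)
Your proposal is correct and follows essentially the same route as the paper: the equivalence (i)$\Leftrightarrow$(ii) is obtained by chaining Corollary \ref{c:(m)-iff-VN-invariant} with Theorem \ref{delta_prop} (using that $\Gamma_{\rm op}=\Gamma$ and $\frak{p}(\chi)=\chi$ in this setting), and the equivalence with (iii) is, as in the paper, ultimately delegated to Neufang's theorem \cite{neufang}. Your extra direct verification of (iii)$\Rightarrow$(i) is sound but not needed once that citation is invoked.
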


\begin{proof}
The equivalence (i)$\Leftrightarrow$(ii) follows from
Corollary \ref{c:(m)-iff-VN-invariant} and Theorem \ref{delta_prop}, while
the equivalence (i)$\Leftrightarrow$(iii) was proved in \cite{neufang}.
\end{proof}


\section{The commutative case}\label{ss_com}

In this section, we specialise the results from Section \ref{s:LCQG} to commutative locally compact quantum groups $\G$.
Thus, fixing a second countable locally compact group $G$, we have that $L^{\infty}(\G) = L^{\infty}(G)$,
$C_0(\G) = C_0(G)$, $L^{\infty}(\hat\G) = \vn(G)$ and $C_0(\hat\G) = C_r^*(G)$.
We identify the tensor product $L^{\infty}(G)\bar\otimes L^{\infty}(G)$ with the von Neumann algebra $L^{\infty}(G\times G)$
in the canonical way.
It is readily verified that the flip $\sigma : L^{\infty}(G\times G) \to L^{\infty}(G\times G)$ is given by
\begin{equation}\label{eq_sth}
\sigma(h)(s,t) = h(t,s), \mbox{ for almost all } (s,t)\in G\times G, \ \ h\in L^{\infty}(G\times G),
\end{equation}
while the co-multiplication $\Gamma : L^\infty(G)\to L^{\infty}(G\times G)$ -- by
\begin{equation}\label{eq_gamma}
\Gamma (a)(s,t) = a(st), \mbox{ for almost all } (s,t)\in G\times G, \ \ a\in L^{\infty}(G).
\end{equation}
It follows that
$\Gamma_{\rm op} : L^\infty(G)\to L^\infty(G\times G)$ is given by
\begin{equation}\label{eq_gammaop}
\Gamma_{\rm op}(a)(s,t) = a(ts),  \mbox{ for almost all } (s,t)\in G\times G,  \ \ a\in L^{\infty}(G).
\end{equation}
Note that the predual of $L^{\infty}(G)$ is (completely) isometric to $L^1(G)$ and the pre-adjoint of $\Gamma$
is the usual convolution product:
$$(f\ast g)(t) = \int f(s)g(s^{-1}t)dt, \ \ t\in G, \ f,g\in L^1(G).$$

Theorem \ref{t:(m)-iff-VN-invariant} specialises in the case under consideration to the following result,
providing a characterisation of the completely bounded maps on $\cl B(L^2(G))$ that
are not necessarily normal and which map $C_r^*(G)$ into $\vn(G)$.

\begin{corollary}\label{c_com}
Let $\chi\in L^\infty(G)\otimes_{\sigma\hh} L^\infty(G)$.
The following are equivalent:

(i) \ $\Phi_{\chi}(C_r^*(G))\subseteq \vn(G)$;

(ii)  $\mathcal T\circ(\Gamma\otimes\Gamma_{\rm op})(\chi) \in L^{\infty}(G) \bar\otimes 1 \bar\otimes L^{\infty}(G)$.
\end{corollary}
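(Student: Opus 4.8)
The plan is to derive Corollary \ref{c_com} as a direct specialisation of Theorem \ref{t:(m)-iff-VN-invariant} to the commutative locally compact quantum group $\G$ determined by $G$. First I would invoke the identifications established at the start of this section: in the commutative case we have $L^\infty(\G) = L^\infty(G)$, so that $\Phi_\chi$ is a completely bounded $L^\infty(G)'$-bimodule map on $\cl B(L^2(G))$, while the dual quantum group satisfies $L^\infty(\hat\G) = \vn(G)$ and $C_0(\hat\G) = C_r^*(G)$. These are exactly the substitutions recorded in the opening paragraph of the section.

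With these identifications in hand, the two conditions of Theorem \ref{t:(m)-iff-VN-invariant} translate term by term. Condition (i) of that theorem, namely $\Phi_\chi(C_0(\hat\G)) \subseteq L^\infty(\hat\G)$, becomes precisely $\Phi_\chi(C_r^*(G)) \subseteq \vn(G)$, which is condition (i) here. Condition (ii), namely $\mathcal T \circ (\Gamma \otimes \Gamma_{\rm op})(\chi) \in L^\infty(\G) \bar\otimes 1 \bar\otimes L^\infty(\G)$, becomes $\mathcal T \circ (\Gamma \otimes \Gamma_{\rm op})(\chi) \in L^\infty(G) \bar\otimes 1 \bar\otimes L^\infty(G)$, which is condition (ii) here. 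The co-multiplication $\Gamma$ and its opposite $\Gamma_{\rm op}$ are the concrete maps described in (\ref{eq_gamma}) and (\ref{eq_gammaop}), but for the proof one need only note that they are the same abstract objects appearing in Theorem \ref{t:(m)-iff-VN-invariant}, now realised on $L^\infty(G)$.

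There is essentially no obstacle: the corollary is a pure specialisation, and the content is entirely carried by the general Theorem \ref{t:(m)-iff-VN-invariant}. The only point requiring a word of care is the verification that the abstract dual-group data coincide with the classical objects $\vn(G)$ and $C_r^*(G)$ in the commutative case, but this is standard and is already asserted in the paragraph preceding the statement. Thus the proof reduces to one sentence citing Theorem \ref{t:(m)-iff-VN-invariant} together with these identifications.

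\begin{proof}
This is the specialisation of Theorem \ref{t:(m)-iff-VN-invariant} to the commutative quantum group associated with $G$. Indeed, in this case $L^\infty(\G) = L^\infty(G)$, $L^\infty(\hat\G) = \vn(G)$ and $C_0(\hat\G) = C_r^*(G)$, so conditions (i) and (ii) of Theorem \ref{t:(m)-iff-VN-invariant} read precisely as conditions (i) and (ii) of the present statement.
\end{proof}
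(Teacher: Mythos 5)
Your proposal is correct and matches the paper exactly: the paper introduces this statement with the words ``Theorem \ref{t:(m)-iff-VN-invariant} specialises in the case under consideration to the following result'' and offers no further argument, relying precisely on the identifications $L^\infty(\G)=L^\infty(G)$, $L^\infty(\hat\G)=\vn(G)$ and $C_0(\hat\G)=C_r^*(G)$ recorded at the start of the section. Nothing further is needed.
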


In the rest of this subsection, we restrict our attention to the case where the
elements $\chi$ from Corollary \ref{c_com} belong to the extended Haagerup tensor product $L^\infty(G)\otimes_{\eh} L^\infty(G)$.
Those are precisely the elements $\chi \in L^\infty(G)\otimes_{\sigma\hh} L^\infty(G)$,
for which the corresponding map $\Phi_{\chi}$ is ($L^{\infty}(G)$-bimodular and) weak* continuous,
and are widely known in the literature as \emph{Schur multipliers}.

Set $V^\infty(G) = L^\infty(G)\otimes_{\eh}L^\infty (G)$ and note that
$V^\infty(G) = (L^1(G)\otimes_{\hh} L^1(G))^*$, up to a complete isometry.
Let $\chi \in V^{\infty}(G)$
and assume that $\chi \sim \sum_{i=1}^{\infty} a_i \otimes b_i$, where
$(a_i)_{i\in\mathbb N}$ and $(b_i)_{i\in \mathbb N}$ in $L^\infty(G)$ are sequences
such that
$$\esssup_{s\in G} \sum_{i = 1}^{\infty} |a_i(s)|^2 < \infty \ \mbox{ and } \  \esssup_{t\in G} \sum_{i = 1}^{\infty} |b_i(t)|^2 < \infty$$
\cite{blecher}.
We identify $\chi$ with the essentially bounded function (denoted in the same way)
$\chi : G\times G \to \bb{C}$, given by
\begin{equation}\label{eq_aibi}
\chi(s,t) = \sum_{i=1}^{\infty} a_i(s)b_i(t), \ \ \ s,t\in G,
\end{equation}
and thus consider $V^{\infty}(G)$ as a subspace of $L^{\infty}(G\times G)$.

Given a function $\nph : G\times G\to \bb{C}$ and $r\in G$, let $\nph_r : G\times G \to \bb{C}$ be the function given by
$\nph_r(s,t) = \nph(sr,tr)$.

\begin{lemma}\label{l_trans}
Let $\chi\in V^{\infty}(G)$ and $r\in G$. Then $\chi_r\in V^{\infty}(G)$ and
$$\Phi_{\chi_r}(T) = \rho_r\Phi_{\chi}(\rho_r^* T \rho_r) \rho_r^*, \ \ \ T\in \cl B(L^2(G)).$$
\end{lemma}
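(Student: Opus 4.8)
The plan is to prove the two assertions in turn, the membership $\chi_r \in V^\infty(G)$ first and then the intertwining formula for the associated map. For the membership, I would use the series representation: if $\chi \sim \sum_i a_i \otimes b_i$ with $\esssup_s \sum_i |a_i(s)|^2 < \infty$ and $\esssup_t \sum_i |b_i(t)|^2 < \infty$, then, writing $(a_i)_r(s) = a_i(sr)$ and $(b_i)_r(t) = b_i(tr)$, we have $\chi_r(s,t) = \sum_i a_i(sr) b_i(tr) = \sum_i (a_i)_r(s)(b_i)_r(t)$. Right-translation by $r$ is a measure-preserving (up to the modular function, but for the $\esssup$ condition only the null sets matter) transformation of $G$, so $\esssup_s \sum_i |(a_i)_r(s)|^2 = \esssup_s \sum_i |a_i(sr)|^2 = \esssup_s \sum_i |a_i(s)|^2 < \infty$, and similarly for the $b_i$. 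Hence the translated families satisfy the same summability conditions, and by the characterisation of $V^\infty(G)$ via (\ref{eq_aibi}) we conclude $\chi_r \in V^\infty(G)$ with $\chi_r \sim \sum_i (a_i)_r \otimes (b_i)_r$.

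For the operator identity, the key observation is how right translation on functions is implemented on $L^2(G)$ by the right regular representation. Concretely, $\rho_r$ acts on $L^2(G)$ (up to a modular normalisation), and conjugation by $\rho_r$ of the multiplication operator $M_a$ by a function $a \in L^\infty(G)$ yields the multiplication operator by the translate: $\rho_r^* M_a \rho_r = M_{a_r}$, where $a_r(s) = a(sr)$. This is the crucial compatibility between the geometric translation on functions and the unitary action on the Hilbert space. I would verify this on a dense set of $\xi \in L^2(G)$ by a direct change-of-variables computation, taking care of the modular function factors so that $\rho_r$ is genuinely unitary.

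Granting this, the identity follows by linearity and weak* continuity. For an elementary symbol $\chi = a \otimes b$ we have $\Phi_\chi(T) = M_a T M_b$, and then $\Phi_{\chi_r}(T) = M_{a_r} T M_{b_r} = (\rho_r^* M_a \rho_r) T (\rho_r^* M_b \rho_r)$. Inserting $\rho_r \rho_r^* = 1$ and regrouping gives $\rho_r^* M_a (\rho_r T \rho_r^*) M_b \rho_r = \rho_r^*\Phi_\chi(\rho_r T \rho_r^*)\rho_r$; comparing with the target, and noting that $\rho_r^* = \rho_{r^{-1}}$, one matches the stated formula $\Phi_{\chi_r}(T) = \rho_r \Phi_\chi(\rho_r^* T \rho_r)\rho_r^*$ after replacing $r$ by $r^{-1}$ in the conjugation convention, or equivalently by being careful about which of $\rho_r, \rho_r^*$ implements $a \mapsto a_r$. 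Extending from elementary tensors to general $\chi \in V^\infty(G)$ is routine: both sides are weak* continuous in $\chi$ (the left side because $\chi \mapsto \Phi_\chi$ is point-weak* continuous, the right side because conjugation by the fixed unitaries $\rho_r, \rho_r^*$ is weak* continuous), and the linear span of elementary tensors is weak* dense.

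The main obstacle I anticipate is purely bookkeeping: getting the conjugation convention $\rho_r^* M_a \rho_r = M_{a_r}$ correct with the right side/exponent, since the right regular representation is defined with a modular-function normalisation and it is easy to produce $\rho_r M_a \rho_r^* = M_{a_r}$ instead, which would flip the roles of $\rho_r$ and $\rho_r^*$ in the final formula. The statement's placement of $\rho_r$ on the outside and $\rho_r^*$ on the inside of $\Phi_\chi$ should be reconciled against whichever convention makes $\rho$ a homomorphism (as asserted earlier, $\rho(g) = R_g(V)$ is an algebra homomorphism); once the elementary computation $\rho_r^* M_a \rho_r = M_{a_r}$ is pinned down, everything else is formal.
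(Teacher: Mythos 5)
Your proposal is correct and follows essentially the same route as the paper: translate the terms of a w*-representation $\chi\sim\sum_i a_i\otimes b_i$ to get $\chi_r\sim\sum_i (a_i)_r\otimes(b_i)_r\in V^\infty(G)$, and then use the conjugation identity relating $M_{a_r}$ to $M_a$ termwise. The bookkeeping point you flag resolves in favour of the statement as written: with $(\rho_r\xi)(s)=\Delta(r)^{1/2}\xi(sr)$ one computes $\rho_r M_a\rho_r^* = M_{a_r}$ (not $\rho_r^* M_a\rho_r$), which is exactly the paper's convention and yields $\Phi_{\chi_r}(T)=\rho_r\Phi_\chi(\rho_r^*T\rho_r)\rho_r^*$ directly.
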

\begin{proof}
For $a\in L^{\infty}(G)$ let $a_r\in L^{\infty}(G)$ be given by
$a_r(s) = a(sr)$, $s\in G$. A direct verification shows that $\rho_r M_a \rho_r^* = M_{a_r}$.
Clearly, if $\chi \sim \sum_{i=1}^{\infty} a_i\otimes b_i$
then $\chi_r(s,t) = \sum_{i=1}^\infty (a_i)_r(s)(b_i)_r(t)$, for almost all $(s,t)$. 
Now, if $T\in \cl B(L^2(G))$ then
\begin{eqnarray*}
\rho_r\Phi_{\chi}(\rho_r^* T \rho_r) \rho_r^*
& = &
\sum_{i=1}^{\infty} (\rho_r M_{a_i} \rho_r^*) T (\rho_r M_{b_i} \rho_r^*)\\
& = &
\sum_{i=1}^{\infty} M_{(a_i)_{r}}  T M_{(b_i)_{r}} = \Phi_{\chi_r}(T).
\end{eqnarray*}
\end{proof}

Recall that, if $X$ is a measure space and $X\times X$ is equipped with the product measure,
a measurable subset $E\subseteq X\times X$ is called \emph{marginally null} \cite{a} if
there exists a null set $M\subseteq X$ such that $E\subseteq (M\times X)\cup (X\times M)$.
If $w_1,w_2 : X\times X\to \bb{C}$ are measurable functions, we say that $w_1$ and $w_2$
are equal marginally almost everywhere (m.a.e.) if
the set $\{(x,y) : w_1(x,y) \neq w_2(x,y)\}$ is marginally null.
We note that if $\chi \in V^\infty(G)$ then the function $\chi$ is well-defined up to a marginally null subset of $G\times G$
and that it completely determines the corresponding map $\Phi_{\chi}$ (see \cite{todorov}).

In the proof of the following lemma, given an element of
$L^\infty(G\times G)$, we choose any representative of its equivalence class,
and denote it still by the same symbol.

\begin{lemma}\label{l_tildechi}
Let $\chi\in V^\infty(G\times G)$. The function
$\tilde\chi : G\times G\times G\to \bb{C}$, given by
$$\tilde\chi(s,r,t) =  \chi(s,r,r,t), \ \ \ s,r,t\in G,$$
is a well-defined element of $L^\infty(G\times G\times G)$.
\end{lemma}
\begin{proof}
For a measurable function $c : \cl Z\to \bb{R}$, where $\cl Z$ is a measure space, 
and a positive real number $\delta$, let
$E_c(\delta) = \{z\in \cl Z : c(z) > \delta\}$.

Suppose that $\chi = a\otimes b$, where $a,b\in L^{\infty}(G\times G)$.
In order to show that $\tilde\chi$ is measurable in this case, it suffices to assume that
$a \geq 0$ and $b\geq 0$.
Then
$$E_{\tilde{\chi}}(\delta) = \cup_{\lambda\in \bb{Q}^+} \{(s,r,t) : (s,r)\in E_a(\lambda) \mbox{ and } (r,t)\in E_b(\delta/\lambda)\}.$$
If $F\subseteq G\times G$ is a set, setting $F^r = \{s\in G : (s,r)\in F\}$ and
$F_r = \{t\in G : (r,t)\in F\}$, we have
$$\{(s,t) : (s,r,t) \in E_{\tilde{\chi}}(\delta)\}
= \cup_{\lambda\in \bb{Q}^+} (E_a(\lambda)^r \times G) \cap (G \times E_b(\delta/\lambda)_r),$$
for each $r\in G$.
It follows that $\tilde\chi$ is a measurable function, when $G\times G \times G$ is equipped with product measure.

For a general $\chi\in V^\infty(G\times G)$, write
$\chi \sim \sum_{i = 1}^{\infty} a_i\otimes b_i$ and let $M$ be a null subset of $G\times G$ such that
\[
\sum_{i=1}^\infty |a_i(s,t)|^2 < C \mbox{ and } \sum_{i=1}^\infty|b_i(s,t)|^2 < C  \text{ whenever } (s,t) \not\in M.
\]
Set $\chi_n = \sum_{i = 1}^{n} a_i\otimes b_i$, $n\in \bb{N}$.
It is clear that
$$\tilde\chi(s,r,t) =  \lim_{n\to \infty} \tilde\chi_n (s,r,t), \ \ \ (s,r,t) \not\in (M\times G) \cup (G\times M).$$
It follows from the previous paragraph that $\tilde\chi$ is measurable.

Let $\sum_{i = 1}^{\infty} a_i\otimes b_i$ and $\sum_{j = 1}^{\infty} c_j\otimes d_j$ be two w*-representations of the
same element $\chi \in V^\infty(G\times G)$.
Then the corresponding functions on $G\times G\times G\times G$, let us call them temporarily $\chi_1$ and $\chi_2$,
are equal marginally almost everywhere on $(G\times G)\times (G\times G)$ (see \cite{todorov} for details).
Let $M\subseteq G\times G$ be a null set such that
$\chi_1(s,r,p,t) = \chi_2(s,r,p,t)$ whenever
$((s,r),(p,t))\not\in (M\times (G\times G)) \cup ((G\times G)\times M)$.
Then $\chi_1(s,r,r,t) = \chi_2(s,r,r,t)$ whenever $(s,r)\not\in M$ and $(r,t)\not\in M$,
that is, whenever $(s,r,t)\not\in (M\times G)\cup (G\times M)$.
It follows that the function $\tilde\chi$ is a well-defined element of $L^{\infty}(G\times G \times G)$.
\end{proof}

\begin{lemma}\label{l:T-in-commutative}
Let $G$ be a second countable locally compact group and $\chi\in V^\infty(G\times G)$.
Then $\mathcal T(\chi) = \tilde\chi$.
\end{lemma}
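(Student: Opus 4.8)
The plan is to identify the abstract map $\mathcal T$ of Lemma~\ref{l:T} with the concrete "diagonal restriction" operation $\chi \mapsto \tilde\chi$ in the commutative case, by testing both sides against the dense supply of elementary tensors $f \otimes \omega \otimes g$ with $f, \omega, g \in L^1(G)$, and invoking the uniqueness clause of Lemma~\ref{l:T} (or equivalently the density of $L^1(G) \odot L^1(G) \odot L^1(G)$ in the projective tensor product). In the commutative setting $\cl M = L^\infty(G)$, so $\cl M_* = L^1(G)$, and the defining identity of $\mathcal T$ reads
\[
\langle \mathcal T(\chi), f \otimes \omega \otimes g\rangle = \langle \m\big((L_f \otimes R_g)(\chi)\big), \omega\rangle.
\]
Since $\tilde\chi$ has already been shown to be a well-defined element of $L^\infty(G \times G \times G)$ in Lemma~\ref{l_tildechi}, and both $\mathcal T(\chi)$ and $\tilde\chi$ live in $L^\infty(G)\bar\otimes L^\infty(G)\bar\otimes L^\infty(G) = L^\infty(G \times G \times G)$, it suffices to check that $\tilde\chi$ satisfies the same defining pairing.

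First I would reduce to elementary tensors $\chi = a \otimes b$ with $a, b \in L^\infty(G \times G)$, using the weak*-continuity of $\mathcal T$ established in Lemma~\ref{l:T} together with the approximation $\chi_n = \sum_{i=1}^n a_i \otimes b_i \to \chi$ already exploited in Lemma~\ref{l_tildechi}; one must check that the corresponding functionals $\langle \tilde{\chi}_n, f \otimes \omega \otimes g\rangle$ converge to $\langle \tilde\chi, f\otimes\omega\otimes g\rangle$, which follows from dominated convergence on the bounded region off the marginally null set. For such an elementary tensor, I would compute the right-hand side directly: using the commutative descriptions $\Gamma(a)(s,t)=a(st)$ from (\ref{eq_gamma}) and the multiplication and slice maps $L_f, R_g, \m$ expressed via integration against $f, g \in L^1(G)$. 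The key computation is that the slice maps applied to $a \otimes b \in (L^\infty(G)\bar\otimes L^\infty(G)) \otimes_{\sigma\hh}(L^\infty(G)\bar\otimes L^\infty(G))$ produce, after multiplying the two middle variables, exactly the function $(s,r,t) \mapsto a(s,r)\,b(r,t)$ paired against $f(s)\,\omega(r)\,g(t)$ — precisely $\tilde\chi(s,r,t) = \chi(s,r,r,t)$ as in Lemma~\ref{l_tildechi}.

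The concrete pairing I expect to obtain is
\[
\langle \m\big((L_f \otimes R_g)(a \otimes b)\big), \omega\rangle
= \iiint a(s,r)\,b(r,t)\, f(s)\,\omega(r)\,g(t)\, ds\, dr\, dt
= \langle \tilde\chi, f \otimes \omega \otimes g\rangle,
\]
which matches the defining identity of $\mathcal T$ and hence forces $\mathcal T(\chi) = \tilde\chi$ by uniqueness. The main obstacle will be the careful bookkeeping of the slice-map calculation: one must verify that $L_f$ and $R_g$ act on the two $L^\infty(G \times G)$ factors in the way that singles out the first variable of the first factor and the second variable of the second factor, respectively, while $\m$ genuinely identifies the inner variable $r$ across both factors (this is the "multiplies the two middle variables" heuristic from the Remark after Lemma~\ref{l:T}). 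Making this rigorous for the weak* spatial tensor product $L^\infty(G)\bar\otimes L^\infty(G)$, rather than merely for simple tensors $a_1 \otimes a_2$, and ensuring the Fubini-type interchange of integrals is legitimate under the $\esssup$ bounds on $\sum|a_i|^2$ and $\sum|b_i|^2$, is where the technical care is needed; everything else is a routine verification against the defining properties already recorded in the excerpt.
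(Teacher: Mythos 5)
Your proposal is correct and follows essentially the same route as the paper: pair both sides against elementary tensors $f\otimes\omega\otimes g$ with $f,\omega,g\in L^1(G)$, compute the slice maps explicitly for the partial sums $\chi_n=\sum_{i=1}^n a_i\otimes b_i$ to get the integral $\iiint a_i(s,r)b_i(r,t)f(s)\omega(r)g(t)$, and pass to the limit using weak* convergence $\chi_n\to\chi$ on the $\mathcal T$ side and dominated convergence (under the $\esssup$ bounds) on the $\tilde\chi$ side. The only cosmetic slip is the mention of $\Gamma(a)(s,t)=a(st)$, which plays no role in this particular lemma.
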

\begin{proof}
For $\omega$, $f$, $g\in L^1(G)$, we have
\[
\langle \mathcal T(\chi), f\otimes \omega \otimes h\rangle = \langle \m((L_f\otimes R_g)(\chi)), \omega\rangle.
\]
Suppose that
$(a_i)_{i\in\mathbb N}$, $(b_i)_{i\in \Bbb{N}}\in L^\infty(G\times G)$ and $C > 0$ are such that
\[
\sum_{i=1}^\infty |a_i(s,t)|^2 < C \mbox{ and }  \sum_{i=1}^\infty|b_i(s,t)|^2 < C \text{ for almost all } (s,t)
\]
and
$\chi \sim \sum_{i = 1}^{\infty} a_i\otimes b_i$.

Let $\chi_n = \sum_{i=1}^n a_i\otimes b_i$, $n\in \bb{N}$.
It is easy to see that
$$L_f (a_i)(r) = \int_G a_i(s,r)f(s) ds \ \mbox{ and } \ R_g(b_i)(r) = \int_G b_i(r,t)g(t) dt$$
and hence
\begin{equation}\label{eqq}
\langle \m((L_f\otimes R_g)(\chi_n)),\omega \rangle = \int_G\int_{G\times G} \sum_{i=1}^n a_i(s,r)b_i(r,t)f(s)\omega(r)g(t)ds\; dt\; dr.
\end{equation}
By \cite[p. 147]{er},
$\chi_n \to_{n\to \infty} \chi$ in the weak* topology of $L^{\infty}(G \times G) \otimes_{\sigma\hh} L^\infty(G\times G)$;
hence the left hand side of (\ref{eqq}) converges to $\langle \m((L_f\otimes R_g)(\chi)),\omega\rangle$.

On the other hand,
$\chi_n\to_{n\to \infty} \chi$ marginally almost everywhere on $(G\times G)\times(G\times G)$.
Similarly to the proof of Lemma \ref{l_tildechi}, we see that
$\tilde{\chi}_n \to_{n\to \infty} \tilde{\chi}$ almost everywhere on $G\times G\times G$.
Moreover,
\[
|\tilde{\chi}_n(s,r,t)|\leq \left(\sum_{i=1}^n |a_i(s,r)|^2\right)^{1/2} \left(\sum_{i=1}^n|b_i(r,t)|^2\right)^{1/2} < C,
\]
for almost all $(s,r,t)\in G\times G \times G$,
and hence Lebesgue's Dominated Convergence Theorem implies that the right hand side of (\ref{eqq}) converges to
 \[
 \int_G \int_{G\times G} \tilde{\chi} (s,r,t)f(s) \omega(r) g(t) dsdtdr.
 \]
Thus,
\[
\langle \mathcal T(\chi), f\otimes \omega \otimes g\rangle = \langle  \tilde{\chi}, f\otimes \omega \otimes g\rangle,
\]
and the proof is complete.
\end{proof}

\begin{lemma}\label{l_gamma}
Let $\chi\in V^{\infty}(G)$ and
$\hat{\chi} : G\times G \times G \times G \to \bb{C}$ be the function given by
$\hat{\chi}(s,r,p,t) = \chi(sr,tp)$.
Then $\hat{\chi} \in V^{\infty}(G\times G)$ and 
\begin{equation}\label{eq_maehat}
(\Gamma\otimes \Gamma_{\rm op})(\chi) = \hat{\chi}, \ \ \mbox{ m.a.e. on } (G\times G)\times (G\times G).
\end{equation}
\end{lemma}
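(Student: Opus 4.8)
The plan is to establish two things about $\hat\chi(s,r,p,t) = \chi(sr,tp)$: first, that it lies in $V^\infty(G\times G)$ (so that the right-hand side of \eqref{eq_maehat} makes sense as an element of the extended Haagerup tensor product), and second, that it agrees marginally almost everywhere with $(\Gamma\otimes\Gamma_{\mathrm{op}})(\chi)$. The natural starting point is a $w^*$-representation $\chi \sim \sum_{i=1}^\infty a_i\otimes b_i$ with the column/row square-summability bounds $\esssup_s \sum_i |a_i(s)|^2 < \infty$ and $\esssup_t \sum_i |b_i(t)|^2 < \infty$, as recorded after \eqref{eq_aibi}. I would then compute $\Gamma(a_i)$ and $\Gamma_{\mathrm{op}}(b_i)$ using the explicit formulas \eqref{eq_gamma} and \eqref{eq_gammaop}, obtaining $\Gamma(a_i)(s,r) = a_i(sr)$ and $\Gamma_{\mathrm{op}}(b_i)(p,t) = b_i(tp)$.

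**Next I would** verify membership in $V^\infty(G\times G) = L^\infty(G\times G)\otimes_{\eh}L^\infty(G\times G)$. Since $\Gamma$ and $\Gamma_{\mathrm{op}}$ are normal completely bounded (indeed normal $*$-homomorphisms), the functorial map $\Gamma\otimes\Gamma_{\mathrm{op}}$ carries $L^\infty(G)\otimes_{\sigma\hh}L^\infty(G)$ into $L^\infty(G\times G)\otimes_{\sigma\hh}L^\infty(G\times G)$; the point is to see it lands in the \emph{extended} Haagerup tensor product with the correct representation. Concretely, I expect the families $(\Gamma(a_i))_i$ and $(\Gamma_{\mathrm{op}}(b_i))_i$ to witness $\hat\chi \sim \sum_i \Gamma(a_i)\otimes\Gamma_{\mathrm{op}}(b_i)$: one checks the square-summability bounds survive, since for almost all $(s,r)$ one has $\sum_i |\Gamma(a_i)(s,r)|^2 = \sum_i |a_i(sr)|^2$, and by translation-invariance of Haar measure the essential supremum over $(s,r)$ equals $\esssup_s \sum_i |a_i(s)|^2 < \infty$ (similarly on the $b$-side). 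This gives $\hat\chi\in V^\infty(G\times G)$ together with the pointwise formula $\hat\chi(s,r,p,t) = \sum_i a_i(sr)b_i(tp) = \chi(sr,tp)$, m.a.e.

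**The identity** \eqref{eq_maehat} should then follow by comparing the two marginally-a.e.\ defined functions. The map $\Gamma\otimes\Gamma_{\mathrm{op}}$ is $w^*$-continuous, so applying it to the truncations $\chi_n = \sum_{i=1}^n a_i\otimes b_i$ (which converge to $\chi$ in the $w^*$-topology of the normal Haagerup tensor product by \cite[p.~147]{er}) yields $(\Gamma\otimes\Gamma_{\mathrm{op}})(\chi_n)\to(\Gamma\otimes\Gamma_{\mathrm{op}})(\chi)$ weak*. For each finite $n$, linearity and \eqref{eq_elte} give $(\Gamma\otimes\Gamma_{\mathrm{op}})(\chi_n) = \sum_{i=1}^n \Gamma(a_i)\otimes\Gamma_{\mathrm{op}}(b_i)$, whose associated function is exactly $\hat\chi_n(s,r,p,t) = \chi_n(sr,tp)$. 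Since $\chi_n\to\chi$ marginally a.e.\ on $(G\times G)\times(G\times G)$ (as in the proof of Lemma \ref{l_tildechi}), the substitution $(s,t)\mapsto(sr,tp)$ shows $\hat\chi_n\to\hat\chi$ marginally a.e., and matching the weak* limit with the m.a.e.\ limit identifies the two as the same element of $V^\infty(G\times G)$.

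**The main obstacle** I anticipate is not the pointwise algebra but the measure-theoretic bookkeeping: one must ensure that the translation $(s,t)\mapsto(sr,tp)$ sends marginally null sets to marginally null sets (which it does, being a product of measure-preserving transformations, but this needs to be stated), and that the resulting function $\hat\chi$ is genuinely well-defined independent of the chosen $w^*$-representation of $\chi$ — precisely the kind of uniqueness argument carried out at the end of Lemma \ref{l_tildechi} via \cite{todorov}. Thus I would lean on the framework already set up there rather than reproving measurability from scratch.
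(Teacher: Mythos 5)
Your proposal is correct and follows essentially the same route as the paper: take a $w^*$-representation $\chi\sim\sum_i a_i\otimes b_i$, use functoriality of the weak* Haagerup tensor product to get $(\Gamma\otimes\Gamma_{\rm op})(\chi)\sim\sum_i\Gamma(a_i)\otimes\Gamma_{\rm op}(b_i)$, and identify the associated function with $\hat\chi$ by showing the partial sums $\sum_{i=1}^n a_i(sr)b_i(tp)$ converge m.a.e.\ on $(G\times G)\times(G\times G)$, the key point being that the pullbacks $\{(s,r):sr\in M\}$ and $\{(p,t):tp\in M\}$ of a null set $M\subseteq G$ are null in $G\times G$. The paper's proof is exactly this argument, with the measure-theoretic step you flagged carried out via the sets $M^\sharp$ and $M^\flat$.
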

\begin{proof}
Suppose that
$\chi\sim \sum_{i=1}^{\infty} a_i\otimes b_i$ is a w*-representation of $\chi$.
By the functoriality of the weak* Haagerup tensor product,
$\Gamma \otimes \Gamma_{\rm op}$ is a well-defined map from $V^{\infty}(G)$ into $V^{\infty}(G\times G)$ and
$$(\Gamma\otimes\Gamma_{\rm op})(\chi) \sim \sum_{i=1}^{\infty} \Gamma(a_i)\otimes \Gamma_{\rm op}(b_i)$$
(see \cite[p 136]{blecher}).
However,
$$\sum_{i=1}^{n} \Gamma(a_i)\otimes \Gamma_{\rm op}(b_i) \to_{n\to\infty} \hat{\chi}, \ \ \mbox{ m.a.e. on }( G\times G) \times (G \times G).$$
In fact, there is a null set $M\subseteq G$ such that
$$\sum_{i=1}^n a_i(sr)b_i(tp) \to_{n\to\infty} \sum_{i=1}^\infty a_i(sr)b_i(tp)$$
whenever $(sr,tp)\notin (M\times G)\cup (G\times M)$. Letting $M^\sharp=\{(s,r): sr\in M\}$ and
 $M^\flat = \{(p,t): tp\in M\}$, we have the convergence whenever
 $(s,r,p,t)\notin (M^\sharp\times (G\times G))\cup (G\times G)\times M^\flat$. 
 We finally note that $M^\sharp$ and $M^\flat$ have measure zero.
The conclusion follows.
\end{proof}

Set
$$V^\infty_{\rm inv}(G) = \{\chi \in V^\infty(G) :  \chi_r = \chi  \mbox{ a.e., for all } r\in G\}$$
(see \cite{todorov} and \cite{spronk}).

\begin{theorem}\label{th_chi1}
Let $G$ be a second countable locally compact group and  $\chi\in V^\infty(G)$. The following are equivalent:

(i) \ \ $\Phi_{\chi}(\vn(G))\subseteq \vn(G)$;

(ii) \ $\mathcal T\circ(\Gamma\otimes\Gamma_{\rm op})(\chi) \in L^{\infty}(G) \bar\otimes 1 \bar\otimes L^{\infty}(G)$;

(iii) $\mathcal T\circ(\Gamma\otimes\Gamma_{\rm op}(\chi)) = (\id\otimes\sigma)\big(\chi\otimes 1\big)$;

(iv) $\chi\in V^\infty_{\rm inv}(G)$.
\end{theorem}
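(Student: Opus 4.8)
The plan is to prove Theorem \ref{th_chi1} by first reducing most of the equivalences to results already established, and then proving the genuinely new implication $(\mathrm{iii})\Leftrightarrow(\mathrm{iv})$ by an explicit computation with the function $\tilde\chi$. The equivalence $(\mathrm{i})\Leftrightarrow(\mathrm{ii})$ is immediate from Corollary \ref{c_com} together with the fact that, in the commutative case, $C_r^*(G) = C_0(\hat\G)$ is weak* dense in $\vn(G) = L^\infty(\hat\G)$ and $\Phi_\chi$ is weak* continuous (since $\chi\in V^\infty(G) = L^\infty(G)\otimes_{\eh}L^\infty(G)$); this is the exact analogue of Corollary \ref{c:(m)-iff-VN-invariant}. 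For $(\mathrm{ii})\Leftrightarrow(\mathrm{iii})$, I would note that $(\mathrm{iii})\Rightarrow(\mathrm{ii})$ is trivial, while the reverse should follow by computing both sides as elements of $L^\infty(G\times G\times G)$ against elementary tensors $f\otimes\omega\otimes g$, exactly as in the proof of Theorem \ref{delta_prop}, using (\ref{eq_fomegag})-style slicing to pin down the middle variable.

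The real content is $(\mathrm{iii})\Leftrightarrow(\mathrm{iv})$, and the plan is to carry it out at the level of functions. First I would combine Lemma \ref{l_gamma} and Lemma \ref{l:T-in-commutative}: by Lemma \ref{l_gamma}, $(\Gamma\otimes\Gamma_{\rm op})(\chi) = \hat\chi \in V^\infty(G\times G)$ with $\hat\chi(s,r,p,t) = \chi(sr,tp)$ marginally a.e., and by Lemma \ref{l:T-in-commutative}, $\mathcal T$ applied to an element of $V^\infty(G\times G)$ is computed by collapsing the two middle variables, i.e.\ $\mathcal T(\hat\chi)(s,r,t) = \widetilde{\hat\chi}(s,r,t) = \hat\chi(s,r,r,t)$. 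Setting $p=r$ in the formula for $\hat\chi$ yields
\begin{equation}\label{eq_key}
\mathcal T\circ(\Gamma\otimes\Gamma_{\rm op})(\chi)(s,r,t) = \chi(sr,tr), \ \ \ \mbox{a.e. } (s,r,t)\in G\times G\times G.
\end{equation}
On the other hand, by (\ref{eq_sth}) the right-hand side of (iii) is the function $(\id\otimes\sigma)(\chi\otimes 1)$, which sends $(s,r,t)\mapsto (\chi\otimes 1)(s,t,r) = \chi(s,t)$. Hence condition (iii) is precisely the equality of the two essentially bounded functions
\begin{equation}\label{eq_fneq}
\chi(sr,tr) = \chi(s,t), \ \ \ \mbox{a.e. } (s,r,t)\in G\times G\times G.
\end{equation}

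It then remains to identify (\ref{eq_fneq}) with the definition of $V^\infty_{\rm inv}(G)$, namely $\chi_r = \chi$ a.e.\ for every $r\in G$, where $\chi_r(s,t) = \chi(sr,tr)$. The forward direction uses a Fubini argument: if $\chi(sr,tr) = \chi(s,t)$ for almost all $(s,r,t)$, then for almost every fixed $r\in G$ the slice $(s,t)\mapsto \chi(sr,tr)$ agrees with $\chi(s,t)$ a.e., giving $\chi_r = \chi$; and since $\chi_r = \chi$ holding for a dense (indeed co-null) set of $r$ extends to all $r$ by the continuity of $r\mapsto \chi_r$ in the appropriate topology together with Lemma \ref{l_trans} (which shows $\chi_r\in V^\infty(G)$ and relates $\Phi_{\chi_r}$ to $\Phi_\chi$ via conjugation by $\rho_r$). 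The reverse direction is the converse Fubini step. The main obstacle I anticipate is handling the marginally-null versus null distinctions carefully: Lemma \ref{l_gamma} gives agreement only marginally a.e.\ on $(G\times G)\times(G\times G)$, and one must verify that restricting to the diagonal $p=r$ preserves a genuine (product-measure) a.e.\ statement on $G\times G\times G$, which is exactly the measurability and well-definedness content secured by Lemma \ref{l_tildechi}. Once (\ref{eq_key}) is justified as an honest a.e.\ identity, the passage between the ``simultaneous in $(s,r,t)$'' formulation (\ref{eq_fneq}) and the ``for each $r$'' formulation defining $V^\infty_{\rm inv}(G)$ is the routine Fubini argument, with second countability of $G$ ensuring $\sigma$-finiteness so that Fubini and the dominated convergence estimates from Lemma \ref{l:T-in-commutative} apply without difficulty.
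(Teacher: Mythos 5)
Your reduction of condition (iii) to the pointwise identity $\chi(sr,tr)=\chi(s,t)$ a.e.\ on $G\times G\times G$ is correct and is exactly the paper's computation (it combines Lemmas \ref{l_gamma} and \ref{l:T-in-commutative} to get $\mathcal T\circ(\Gamma\otimes\Gamma_{\rm op})(\chi)(s,r,t)=\chi_r(s,t)$ a.e., with Lemma \ref{l_tildechi} taking care of the marginal-versus-product null issue on the diagonal $p=r$), and your treatment of (i)$\Leftrightarrow$(ii) and of (iv)$\Rightarrow$(iii) matches the paper. The genuine gap is the implication (ii)$\Rightarrow$(iii), which you propose to obtain by ``slicing against $f\otimes\omega\otimes g$ to pin down the middle variable, as in Theorem \ref{delta_prop}.'' Slicing cannot do this: condition (ii) only says that the function $(s,r,t)\mapsto\chi(sr,tr)$ agrees a.e.\ with some $\nph(s,t)$ not depending on $r$, and no amount of pairing with elementary tensors will tell you that this unknown $\nph$ equals $\chi$ rather than some other element of $V^\infty(G)$ --- that identification amounts to evaluating the a.e.-defined family $r\mapsto\chi_r$ at the single point $r=e$, which is meaningless for $L^\infty$ data without an extra continuity input. (In Theorem \ref{delta_prop} the analogous identification is \emph{not} obtained by slicing either; it comes out of the spectral synthesis argument in the direction starting from the support condition.) As written, your proof establishes (i)$\Leftrightarrow$(ii), (iii)$\Rightarrow$(ii) and (iii)$\Leftrightarrow$(iv), but nothing leads out of (ii), so the equivalence is not closed.

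The repair uses only ingredients you already have, deployed one step earlier: argue (ii)$\Rightarrow$(iv) directly, which is what the paper does. From (ii) and the a.e.\ identity above, Fubini gives a null set $M\subseteq G$ and $\nph\in L^\infty(G\times G)$ with $\chi_r=\nph$ a.e.\ for all $r\notin M$. Lemma \ref{l_trans} expresses $\Phi_{\chi_r}$ as $T\mapsto\rho_r\Phi_\chi(\rho_r^*T\rho_r)\rho_r^*$, and the strong continuity of the right regular representation makes $r\mapsto\langle\Phi_{\chi_r}(\xi\eta^*)\xi_0,\eta_0\rangle$ continuous; since $\Phi_{\chi_r}$ determines $\chi_r$ (up to a marginally null, hence null, set), the equality $\chi_r=\nph$ propagates from the co-null set $G\setminus M$ to every $r\in G$, and taking $r=e$ finally yields $\nph=\chi$, i.e.\ (iv). In your version the continuity argument is invoked only in (iii)$\Rightarrow$(iv), where $\nph=\chi$ is already part of the hypothesis, so it is doing strictly less work than it must. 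With the implication chain reordered as (i)$\Leftrightarrow$(ii), (ii)$\Rightarrow$(iv)$\Rightarrow$(iii)$\Rightarrow$(ii), your argument becomes the paper's proof.
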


\begin{proof}
(i)$\Leftrightarrow$(ii) follows from Corollary \ref{c_com} and the weak* continuity of $\Phi_{\chi}$.

(ii)$\Rightarrow$(iv)
By Lemmas \ref{l:T-in-commutative} and \ref{l_gamma},
\begin{equation}\label{eq_trst}
\mathcal T\circ(\Gamma\otimes\Gamma_{\rm op}(\chi))(s,r,t) =
(\Gamma\otimes\Gamma_{\rm op})(\chi)(s,r,r,t) = \chi_r(s, t),
\end{equation}
for almost all $(s,r,t)\in G\times G\times G$.
Thus, there exist a null set $M\subseteq G$ and a function $\nph\in L^{\infty}(G\times G)$ such that
$$\chi_r = \nph \mbox{ a.e.,} \ \ \mbox{ if } r\not\in M.$$

Let $\xi, \xi_0, \eta, \eta_0\in L^2(G)$ and write $\xi\eta^*$ for the rank one operator on $L^2(G)$ given by
$(\xi\eta^*)(\zeta) = \langle\zeta,\eta\rangle\xi$.
By Lemma \ref{l_trans}, $\nph\in V^{\infty}(G)$ and, whenever $r\not\in M$, we have
\begin{eqnarray*}
\langle \Phi_{\nph}(\xi\eta^*)\xi_0,\eta_0\rangle
& = &
\langle \rho_r\Phi_{\chi}(\rho_r^* (\xi\eta^*) \rho_r) \rho_r^* \xi_0,\eta_0\rangle\\
& = &
\langle \Phi_{\chi}((\rho_r^*\xi)(\rho_r^*\eta)^*) \rho_r^* \xi_0, \rho_r^*\eta_0\rangle.
\end{eqnarray*}
Since the right regular representation is strongly continuous,
the map from $G$ into $\cl B(L^2(G))$, sending $r$ to $(\rho_r^*\xi)(\rho_r^*\eta)^*$, is norm continuous.
Since the family $\{(\rho_r^*\xi)( \rho_r^*\eta)^* : r\in G\}$ is uniformly bounded,
we have that the map
$$r\to \langle \Phi_{\chi}((\rho_r^*\xi)(\rho_r^*\eta)^*) \rho_r^* \xi_0, \rho_r^*\eta_0\rangle$$
is continuous.
Now Lemma \ref{l_trans} implies that
$$\langle \Phi_{\nph}(\xi\eta^*)\xi_0,\eta_0\rangle = \langle \Phi_{\chi_r}(\xi\eta^*)\xi_0,\eta_0\rangle, \ \ \ r\in G,$$
which shows that
$\chi_r = \nph$ almost everywhere for every $r\in G$; in particular, $\nph = \chi$ almost everywhere, 
and (iv) is established.

(iv)$\Rightarrow$(iii)
By (\ref{eq_sth}),
\begin{equation}\label{eq_chirst}
(\id\otimes\sigma)(\chi\otimes 1)(s,r,t) = \chi(s,t), \ \ \mbox{ for almost all } (s,r,t)\in G\times G\times G,
\end{equation}
and the claim now follows from (\ref{eq_trst}).

(iii)$\Rightarrow$(ii) follows from (\ref{eq_chirst}).
\end{proof}

\bigskip

\noindent {\bf Acknowledgement. } We are grateful to Jason Crann for a number of fruitful conversations
on the topic of this  paper.

\end{document}